%-----------------------------------------------------------------------------
%                  A LaTeX-2e file
%-----------------------------------------------------------------------------
%
% the invariants and coinvariants for the second natural action
% of the symmetric group on noncommutative polynomials.
% 
%-----------------------------------------------------------------------------
% Begin INPUT FILES & PAGE DEFINITIONS
%-----------------------------------------------------------------------------
\documentclass[12pt,reqno]{amsart}
\usepackage{amssymb,amsfonts,amsbsy}%,stmaryrd}%,mathrsfs}
\usepackage{eucal}
\usepackage{stmaryrd,bbm}
\usepackage{pstricks,pst-node}
\usepackage[width=.90\textwidth,font=small,labelfont=sc]{caption}
\usepackage[bookmarks=false,colorlinks,linkcolor=mylinkcolor,citecolor=mycitecolor]{hyperref} 
%\usepackage{bookmark}
% Define colors for use with hyperref
	\definecolor{mycitecolor}{rgb}{.1,.5,.1}
	\definecolor{mylinkcolor}{rgb}{0,0,.6}	
%	\definecolor{mycitecolor}{rgb}{0,0,0}
%	\definecolor{mylinkcolor}{rgb}{0,0,0}

%-----------------------------------------------------------------------------
\addtolength{\textwidth}{64pt}
\addtolength{\textheight}{64pt} 
\addtolength{\hoffset}{-32pt}
\addtolength{\voffset}{-32pt} 

%\headheight=8pt    \topmargin=0pt
%\textheight=624pt \textwidth=450pt
%\oddsidemargin=9pt \evensidemargin=9pt
%-----------------------------------------------------------------------------

%-----------------------------------------------------------------------------
% Begin PERSONAL DEFINITIONS
%-----------------------------------------------------------------------------
% Define colors for use with partitions
	\definecolor{myblue}{rgb}{0,0,.6}
	\definecolor{myred}{rgb}{.6,0,0}
	\definecolor{mycyan}{cmyk}{1,0,0,0}
	\definecolor{mymagenta}{cmyk}{0,1,0,0}

%\def\myblue{\blue}
%\def\myred{\blue}

%\swapnumbers
%\theoremstyle{plain}
\newtheorem{theo}{Theorem}%[section]
\newtheorem{prop}[theo]{Proposition}
\newtheorem{coro}[theo]{Corollary}

%\theoremstyle{definition}%[section] 

%%% A way to make notes to yourself %%%

%%% A way to make BIG notes to yourself %%%

%remove the comment from the following line to remove all the
% extra proofs:
%\renewcommand{\idiot}[1]{}

%\newcommand{\set}[2][0]{\if0#1{#2} \else{\{#2\}}\fi}
%\def\sep{{,}}
%\def\mrg{{|}}
\def\mrg{{\magenta|}}

\def\Sym{{\mathfrak S}}
\def\actv{\mathrel{\hbox{$\pmb\cdot_{\!\!\!{\scriptscriptstyle v}}$}}}

\def\K{{\mathbb K}} 
\def\Q{{\mathbb Q}}
\def\N{{\mathbb N}}

\def\id{\mathrm{id}}

\def\charac{\raise 2pt\hbox{$\chi$}}

\def\ncsym{{\mathcal N}}
\def\sym{S^{\Sym}}

\def\cosym{{\mathcal C}}

\def\ab{\hbox{\rm \textbf{ab}}}

\def\shape#1{|#1|}
\def\shape#1{\sort{#1}}
\newcommand{\shift}[2]{{#1^{\scriptstyle+#2}}}
\def\std#1{{#1}^{\scriptstyle\downarrow}}

\def\word{\hbox{\textsl{\small\textsf{w}}}}

\def\atms{\dot{\Pi}}

\def\iDelta{\Delta_{+}}

\def\bs{\boldsymbol}
\def\bb{\raisebox{.10ex}{\small$\bullet$}}

\def\m{{\mathbf m}}
\newcommand{\Hilb}[2]{\mathrm{Hilb}_{#1}(#2)}
\def\x{{\mathbf x}}
\def\y{{\mathbf y}}
\def\z{{\mathbf z}}
\def\q{{\boldsymbol q}}
\def\a{{\boldsymbol a}}
\def\A{{\mathbf A}}
\def\B{{\mathbf B}}
\def\C{{\mathbf C}}
\def\sort#1{#1^{\hskip-1pt \ssearrow}}
\def\sort#1{\smash{\stackrel{\smash{\curvearrowright}}{\smash{#1}}}}
\def\sort#1{\rightthreetimes(#1)}
\def\sort#1{{\lambda}(#1)}
\def\hits{\mathop{{\scriptstyle\amalg}}}
\def\hits{\diamond}

\newcommand{\nshuf}{\mathrel{\raise1pt\hbox{$\scriptscriptstyle\cup{\mskip-4mu}\cup$}}}
\newcommand{\snshuf}{\mathrel{\raise.5pt\hbox{$\scriptscriptstyle\cup{\mskip-4mu}\cup$}}}
\newcommand{\cp}[1]{\mathrel{\#_{#1}\!}}

%\newdir^{ (}{{}*!/-5pt/@^{(}}%  % A new tip style for right hook arrows.
%                                % This means the object given by
%                                % directional ^{(} has its reference point shifted ! by the vector /-5pt/ .

%\newcommand{\djcup}{\sqcup}
\newcommand{\djcup}{\ensuremath{\mathaccent\cdot\cup}}

%---Begin: Berstel's ps-magic-------------------
\newpsobject{showgrid}{psgrid}{%
  subgriddiv=1,griddots=10,gridlabels=6pt}
\newif\ifhrule\hrulefalse
%\newif\ifhrule\hruletrue
%---End: Berstel's ps-magic-------------------

%-----------------------------------------------------------------------------
% Begin TOP MATTER
%-----------------------------------------------------------------------------

\title[Noncommutative invariants and coinvariant space]{%
Invariant and coinvariant spaces for the algebra of symmetric polynomials in non-commuting variables
}

\author{Fran\c{c}ois Bergeron}
\address[Fran\c{c}ois Bergeron]{%
	LaCIM\\ 
	Universit\'e du Qu\'ebec \`a Montr\'eal\\ 
	Case Postale 8888, succursale Centre-ville\\ 
	Montr\'eal (Qu\'ebec) H3C 3P8\\ 
	CANADA 
}
\email{bergeron.francois@uqam.ca}
\thanks{F. Bergeron is supported by NSERC-Canada and FQRNT-Qu\'ebec.}

\author{Aaron Lauve}
\address[Aaron Lauve]{%
	Department of Mathematics, 
	Texas A\&M University,
	College Station, TX\, 77843, 
	USA
}
\email{lauve@math.tamu.edu}

%-----------------------------------------------------------------------------
% Begin DOCUMENT
%-----------------------------------------------------------------------------
\begin{document}

\begin{abstract}
We analyze the structure of the algebra $\mathbb{K}\langle\mathbf{x}\rangle^{\mathfrak{S}_n}$ of symmetric polynomials in non-commuting variables in so far as it relates to $\mathbb{K}[\mathbf{x}]^{\mathfrak{S}_n}$, its commutative counterpart. Using the ``place-action'' of the symmetric group, we are able to realize the latter as the invariant polynomials inside the former. We discover a tensor product decomposition of $\mathbb{K}\langle\mathbf{x}\rangle^{\mathfrak{S}_n}$ analogous to the classical theorems of Chevalley, Shephard-Todd on finite reflection groups. 
%In the case $|\mathbf{x}|=\infty$, our techniques simplify to a form readily generalized to many other familiar pairs of combinatorial Hopf algebras.
\vskip 2ex

\noindent{\sc R\'esum\'e.}\ Nous analysons la structure de l'alg\`ebre $\mathbb{K}\langle\mathbf{x}\rangle^{\mathfrak{S}_n}$ des polyn\^omes sym\'e\-triques en des variables non-commu\-tatives pour obtenir des analogues des r\'esultats classiques concernant  la structure de l'anneau $\mathbb{K}[\mathbf{x}]^{\mathfrak{S}_n}$ des polyn\^omes sym\'etriques en des variables commutatives. Plus pr\'ecis\'ement, au moyen de ``l'action par positions'', on r\'ealise  $\mathbb{K}[\mathbf{x}]^{\mathfrak{S}_n}$ comme sous-module de $\mathbb{K}\langle\mathbf{x}\rangle^{\mathfrak{S}_n}$. On d\'ecouvre alors une nouvelle d\'ecomposition de $\mathbb{K}\langle\mathbf{x}\rangle^{\mathfrak{S}_n}$ comme produit tensorial, obtenant ainsi un analogues des th\'eor\`emes classiques de Chevalley et Shephard-Todd. 
%Dans le cas $|\mathbf{x}|=\infty$, nos techniques se simplifient en une forme ais\'ement g\'en\'eralisables \`a beaucoup d'autres paires d'alg\`ebres de Hopf  famili\`eres.
\end{abstract}

\date{2 October 2009}

\keywords{Chevalley theorem, symmetric group, noncommutative symmetric polynomials, set partitions, restricted growth functions}

\maketitle

%%-----------------------------------------------------------------------------
%\setcounter{tocdepth}{1}% or 2 or 3
%{ \parskip=0pt\footnotesize \tableofcontents}
%%-----------------------------------------------------------------------------

%-----------------------------------------------------------------------------
% Begin SECTION
%-----------------------------------------------------------------------------
%\mbox{}\vskip2ex
\section{Introduction}\label{sec:intro}
One of the more striking results of invariant theory is certainly the following: if $W$ is a finite group of $n\times n$ matrices (over some field $\K$ containing $\Q$), then there is a $W$-module decomposition of the polynomial ring $S=\K[{\x}]$, in  variables ${\x}=\{x_1,  x_2, \ldots, x_n\}$, as a tensor product 
\begin{equation}\label{canonical}
	S \simeq S_W \otimes S^W\,
\end{equation}
if and only if $W$ is a group generated by (pseudo) reflections. 
As usual, $S$ is afforded a natural $W$-module structure by considering it as the symmetric space on the {defining vector space} $X^*$ for $W$, e.g., $w\cdot f(\x) = f(\x \cdot w)$. It is customary to denote by $S^W$ the ring of $W$-invariant polynomials for this action. To finish parsing (\ref{canonical}), recall that $S_W$ stands for the \textbf{coinvariant space}, i.e., the $W$-module 
\begin{equation}\label{eq:coinvariants}
	S_W := S /\big\langle S^W_{+} \big\rangle
\end{equation}
defined as the quotient of $S$ by the ideal generated by constant-term free $W$-invariant polynomials. 
We give $S$ an $\N$-grading by degree in the variables $\x$. Since the $W$-action on $S$ preserves degrees, both $S^W$ and $S_W$ inherit a grading from the one on $S$, and \eqref{canonical} is an isomorphism of graded $W$-modules. One of the motivations behind the quotient in (\ref{eq:coinvariants}) is to eliminate trivially redundant copies of irreducible $W$-modules inside $S$. Indeed, if $\mathcal V$ is such a module and $f$ is any $W$-invariant polynomial with no constant term, then $\mathcal{V}f$ is an isomorphic copy of $\mathcal V$ living within $\big\langle S^W_{+}\big\rangle$. Thus, the coinvariant space $S_W$ is the more interesting part of the story. 

The context for the present paper is the algebra $T=\K\langle\x\rangle$ of noncommutative polynomials, with $W$-module structure on $T$ obtained by considering it as the tensor space on the defining space $X^*$ for $W$. 
In the special case when $W$ is the symmetric group $\Sym_n$, we elucidate a relationship between the space  $S^W$ and the subalgebra $T^W$  of $W$-invariants in $T$.  The subalgebra  $T^W$ was first studied in \cite{BerCoh:1969,Wol:1936} with the aim of obtaining noncommutative analogs of classical results concerning symmetric function theory. Recent work in \cite{BRRZ:2008,RosSag:2006} has extended a large part of the story surrounding (\ref{canonical}) to this noncommutative context. In particular, there is an explicit $\Sym_n$-module decomposition of the form $T \simeq T_{\Sym_n}\otimes T^{\Sym_n}$ \cite[Theorem 8.7]{BRRZ:2008}. See \cite{For:1985} for a survey of other results in noncommutative invariant theory.

By contrast, our work proceeds in a somewhat complementary direction. 
%(See Figure \ref{fig:Hopf cube}.) 
We consider $\ncsym=T^{\Sym_n}$ as a tower of $\Sym_d$-modules under the ``place-action'' and realize $S^{\Sym_n}$ inside $\ncsym$ as a subspace $\Lambda$ of invariants for this action. This leads to a decomposition of $\ncsym$ analogous to (\ref{canonical}). More explicitly, our main result is as follows.

\begin{theo}\label{thm:main} There is an explicitly constructed subspace  $\cosym$ of $\ncsym$ so that $\cosym$ and the place-action invariants $\Lambda$ exhibit a  graded vector space isomorphism
\begin{equation}\label{eq:main}
    	\ncsym \simeq \cosym \otimes \Lambda.
\end{equation}
\end{theo}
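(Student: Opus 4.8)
The plan is to realize the isomorphism \eqref{eq:main} concretely as a multiplication map and to verify that it is a graded bijection by combining an enumerative (Hilbert series) identity with a triangularity argument in the monomial basis.

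First I would fix the combinatorial model. Recall that in degree $d$ the space $\ncsym$ has a linear basis of monomial functions $\m_\pi$ indexed by set partitions $\pi$ of $\{1,\dots,d\}$ into at most $n$ blocks (equivalently, by restricted growth functions of length $d$). The place-action of $\Sym_d$ permutes the positions $\{1,\dots,d\}$ and hence permutes the $\m_\pi$ through the induced action on set partitions; a direct check gives $\sigma\cdot\m_\pi=\m_{\sigma(\pi)}$, so the orbits are exactly the sets of $\pi$ sharing a common multiset of block sizes, indexed by integer partitions $\lambda$. Consequently the place-invariants $\Lambda$ have as a basis the orbit sums $M_\lambda:=\sum_{\pi}\m_\pi$ over all $\pi$ of type $\lambda$, and these are precisely the images of the monomial symmetric polynomials $m_\lambda$ under the identification $\Lambda\cong S^{\Sym_n}$. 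In particular $\Hilb{q}{\Lambda}=\prod_{i=1}^{n}(1-q^{i})^{-1}$.

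Next I would construct $\cosym$. The guiding principle is that every set partition should factor uniquely into an \emph{atomic} part and an invariant part. I would single out a set $\mathcal B$ of atomic set partitions — those whose restricted growth functions cannot be produced by adjoining, to a smaller atomic partition, the extra blocks contributed by an orbit sum — and set $\cosym=\mathrm{span}\{\m_\beta : \beta\in\mathcal B\}$. The design of $\mathcal B$ must be rigged so that the assignment $(\beta,\lambda)\mapsto \beta\ast\lambda$, sending an atomic $\beta$ and a partition $\lambda$ to a distinguished set partition occurring in the product $\m_\beta\cdot M_\lambda$, is a bijection onto all set partitions; this is equivalent to the Hilbert series identity $\Hilb{q}{\ncsym}=\Hilb{q}{\cosym}\cdot\Hilb{q}{\Lambda}$.

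Finally I would prove that the multiplication map $\mu:\cosym\otimes\Lambda\to\ncsym$, given by $\mu(\m_\beta\otimes M_\lambda)=\m_\beta\cdot M_\lambda$, is an isomorphism of graded vector spaces. It is visibly graded, so by the Hilbert series identity it suffices to prove injectivity (equivalently surjectivity). For this I would fix a total order on set partitions refining the partial order by block structure and establish a unitriangularity statement: expanding $\m_\beta\cdot M_\lambda$ in the monomial basis yields $\m_{\beta\ast\lambda}$ as its leading term, with all other terms strictly smaller. Since $(\beta,\lambda)\mapsto\beta\ast\lambda$ is the bijection above, the matrix expressing $\{\m_\beta\cdot M_\lambda\}$ in the basis $\{\m_\pi\}$ is unitriangular, hence invertible, yielding \eqref{eq:main}. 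The main obstacle is precisely the combinatorial core of this last step: because $\ncsym$ is noncommutative, the product $\m_\beta\cdot M_\lambda$ expands as a sum over all ways of interleaving the positions and blocks contributed by $\beta$ with those contributed by $M_\lambda$, producing many competing terms. The work will be to pin down the unique leading interleaving, to choose the order so that it is compatible with this expansion, and to verify that $(\beta,\lambda)\mapsto\beta\ast\lambda$ is genuinely a bijection so that the triangular system is square and invertible.
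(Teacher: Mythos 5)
Your overall strategy --- find a distinguished set $\mathcal B$ of set partitions, show that every set partition factors uniquely as (element of $\mathcal B$) followed by (a partition-indexed invariant piece), and conclude via a leading-term triangularity argument --- is exactly the skeleton of the paper's proof in the hard case $|\x|<\infty$. The genuine gap is that your proposal postulates, rather than constructs, all three ingredients on which that skeleton stands: the set $\mathcal B$, the assignment $(\beta,\lambda)\mapsto\beta\ast\lambda$, and the total order making it a leading term. Saying that $\mathcal B$ ``must be rigged'' so that the assignment is a bijection is assuming precisely what has to be proved; this is where the paper's entire Section \ref{sec:n<finity} --- more precisely Section \ref{sec:n<infty} --- lives. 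Concretely, the paper works with restricted growth words $\word(\A)$, singles out \emph{primary} words, \emph{convex} words $\word(\mu)=1^{\mu_1}2^{\mu_2}\cdots$, and \emph{bimodal} words, proves that every restricted growth word has a unique bimodal decomposition $w=w'\mrg w''\mrg\cdots\mrg w^{(r)}\mrg\word(\mu)$ ending in a convex tail, takes $\cosym$ to be the subalgebra generated by the bimodal monomials (your $\mathcal B$ would be the tail-free words), and rests the triangularity on the key lemma that the concatenation $uv$ is the lexicographically \emph{smallest} word occurring in $\word(\A\nshuf\B)$. Also, your claim that the bijectivity of $(\beta,\lambda)\mapsto\beta\ast\lambda$ ``is equivalent to the Hilbert series identity'' is only half right: equal Hilbert series give equinumerosity, hence \emph{some} degree-preserving bijection, but not one compatible with leading terms, which is what the unitriangular matrix argument requires.

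There is also a concrete trap hidden in your unfinished step that is worth naming, because it is the very reason the finite case is hard. A total order ``refining the partial order by block structure'' suggests taking as leading term the interleaving with the most blocks, i.e.\ the term of shape $\lambda(\A)\cup\lambda(\B)$ --- this is the natural leading term in the commutative ring $\sym$ and in the paper's $|\x|=\infty$ analysis. But for $|\x|=n<\infty$ that term can vanish, since $m_\C=0$ whenever $\C$ has more than $n$ blocks: e.g.\ with $n=3$, in $m_{121}\cdot m_{12}$ the would-be leading term $m_{13.2.4.5}$ is zero. The order must instead be chosen so that the leading term is the lexicographically smallest word, namely the concatenation $\word(\A)\word(\B)$, which \emph{merges} blocks and therefore survives for every $n$. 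With that choice your multiplication map $m_\beta\otimes\m_\mu\mapsto m_\beta\cdot\m_\mu$ does have leading term $m_{\word(\beta)\word(\mu)}$ (using that $\word(\mu)$ is lex-least among words of shape $\mu$), and the argument closes essentially as you describe; the paper proves the same triangularity for the slightly cleaner map sending each basis tensor directly to the concatenation monomial. Note finally that for $|\x|=\infty$ the paper gives a different, structural proof (the Blattner--Cohen--Montgomery crossed-product theorem applied to the coalgebra-split exact sequence $\ncsym\to\sym\to 0$), which yields more than a vector-space isomorphism --- it identifies $\cosym$ as a Hopf kernel --- whereas your combinatorial route, once completed, treats both cases uniformly but only at the vector-space level.
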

An analogous result holds in the case $|\x|=\infty$. An immediate corollary in either case is the Hilbert series formula
\begin{equation}\label{eq:Hilb-t-cosym}
    \Hilb{t}{\cosym}= \Hilb t\ncsym \, \prod_{i=1}^{|\x|} (1-t^i).
\end{equation}
Here, the \textbf{Hilbert series} of a graded space $\mathcal{V}=\bigoplus_{d\geq 0} \mathcal{V}_d$ is the formal power series defined as
    $$\Hilb{t}{\mathcal{V}}=\sum_{d\geq 0} \dim\mathcal{V}_d\, t^d,$$
 where $\mathcal{V}_d$ is the \textbf{homogeneous degree $d$ component} of $\mathcal{V}$. The fact that (\ref{eq:Hilb-t-cosym}) expands as a series in $\N\llbracket t\rrbracket$ is not at all obvious, as one may check that the Hilbert series of $ \ncsym$ is
\begin{equation}\label{eq:bell_ogf}
	\Hilb{t}{\ncsym}=
         1+ \sum_{k=1}^{|\x|} \frac{t^k} {(1-t)(1-2\,t)\cdots (1-k\,t)}\,.
\end{equation}

In Sections \ref{sec:sym} and \ref{sec:ncsym}, we recall the relevant structural features of $S$ and $T$. Section \ref{sec:place action} describes the place-action structure of $T$ and the original motivation for our work. Our main results are proven in Sections \ref{sec:n=infty} and \ref{sec:n<infty}. 
We underline that the harder part of our work lies in working out the case $|\x|< \infty$. This is accomplished in Section \ref{sec:n<infty}. If we restrict ourselves to the case $|\x|= \infty$, both $\ncsym$ and $\Lambda$ become Hopf algebras and our results are then consequences of a general theorem of Blattner, Cohen and Montgomery. As we will see in Section \ref{sec:n=infty}, stronger results hold in this simpler context. For example, (\ref{eq:Hilb-t-cosym}) may be refined to a statement about ``shape'' enumeration. 
%%

%-----------------------------------------------------------------------------
% Begin SECTION
%-----------------------------------------------------------------------------
% footnotes, etc.
%-----------------------------------------------------------------------------
%\mbox{}\vskip2ex
\section{The algebra $\sym$ of symmetric polynomials}\label{sec:sym}

%-----------------------------------------------------------------------------
\subsection{Vector space structure of $\sym$}
We specialize our introductory discussion to the group $W=\Sym_n$ of permutation matrices (writing $|\x|=n$). The action on $S=\K[\x]$ is simply the \textbf{permutation action} $\sigma\cdot x_i=x_{\sigma(i)}$ and $S^{\Sym_n}$ comprises the familiar symmetric polynomials. We suppress $n$ in the notation and denote the subring of symmetric polynomials by $\sym$. (Note that upon sending $n$ to $\infty$, the elements of $\sym$ become formal series in $\K\llbracket \x\rrbracket$ of bounded degree; we still call them polynomials to affect a uniform discussion.) A monomial in $S$ of degree $d$ may be written as follows: given an $r$-subset $\y = \{y_1,y_2,\ldots ,y_r\}$ of $\x$ and a \textbf{composition} of $d$ into $r$ parts, ${\a}=(a_1,a_2,\ldots,a_r)$ ($a_i>0$), we write $\y^\a$ for $y_1^{a_1}y_2^{a_2}\cdots y_r^{a_r}$. We assume that the variables $y_i$ are naturally ordered, so that whenever $y_i=x_j$ and $y_{i+1}=x_k$ we have $j<k$. Reordering the entries of a composition ${\a}$ in {decreasing} order results in a partition $\sort{\a}$ called the \textbf{shape} of $\a$. Summing over monomials $\y^\a$ with the same shape leads to the monomial symmetric polynomial
\[
	m_{\mu}=m_{\mu}(\x):= \sum_{\sort{\a}=\mu,\ \y\subseteq \x} \! \y^{{\a}}.
\]
Letting $\mu=(\mu_1,\mu_2,\ldots,\mu_r)$ run over all partitions of $d=|\mu|=\mu_1+\mu_2+cdots+\mu_r$ gives a basis for $\sym_d$. As usual, we set $m_0:=1$ and agree that $m_\mu = 0$ if $\mu$ has too many parts (i.e., $n<r$).

%-----------------------------------------------------------------------------
\subsection{Dimension enumeration}
A fundamental result in the invariant theory of $\Sym_n$ is that $\sym$ is generated by a family $\{f_k\}_{1\leq k\leq n}$ of algebraically independent   symmetric polynomials, having respective degrees $\deg f_k=k$. (One may choose $\{m_k\}_{1\leq k \leq n}$ for such a family.) It follows that the {Hilbert series} of $\sym$ is 
\begin{equation}\label{eq:hilbert_inv}
	\Hilb{t}{\sym} = \prod_{i=1}^n \frac{1}{1-t^i}.
\end{equation}
Recalling that the Hilbert series of $S$ is $(1-t)^{-n}$, we see from (\ref{canonical}) and (\ref{eq:hilbert_inv}) that the Hilbert series for the coinvariant space $S_{\Sym}$ is the well-known $t$-analog of $n!$:
   \begin{equation}\label{eq:hilbert_coinv}
              \prod_{i=1}^n \frac{1-t^i}{1-t} = \prod_{i=1}^n (1+t+\cdots +t^{i-1}).
    \end{equation} 
In particular, contrary to the situation in (\ref{eq:Hilb-t-cosym}), the series $\Hilb{t}{S}/\Hilb{t}{\sym}$ in $\Q\llbracket t\rrbracket$ \emph{obviously} %manifestly positive.
belongs to $\N\llbracket t\rrbracket$.

%-----------------------------------------------------------------------------
\subsection{Algebra and coalgebra structures of $\sym$}
Given partitions $\mu$ and $\nu$, there is an explicit multiplication rule for computing the product $m_\mu\cdot m_\nu$. In lieu of giving the formula, see \cite[\S4.1]{BRRZ:2008}, we simply give an example
\begin{equation}\label{eq:m21*m11}
m_{21}\bs{\cdot} m_{11} = 3\,m_{2111}+2\,m_{221}+2\,m_{311}+m_{32}
\end{equation}
and highlight two features relevant to the coming discussion. 

First, we note that if $n<4$, then the first term disappears. However, if $n$ is sufficiently large then analogs of this term always appear with positive integer coefficients. If $\mu=(\mu_1,\mu_2,\ldots,\mu_r)$ and $\nu=(\nu_1,\nu_2,\ldots,\nu_s)$ with $r\leq s$, then the partition indexing the left-most term in $m_\mu m_\nu$ is denoted by $\mu \cup \nu$ and is given by sorting the list $(\mu_1,\ldots, \mu_r,\nu_1,\ldots,\nu_s)$ in increasing order; the right-most term is indexed by $\mu + \nu := (\mu_1+\nu_1,\ldots,\mu_r +\nu_r,\nu_{r+1},\ldots,\nu_s)$. Taking $\mu=31$ and $\nu=221$, we would have $\mu\cup\nu = 32211$ and $\mu+\nu= 531$. 

Second, we point out that the leftmost term (indexed by $\mu\cup\nu$) is indeed a \emph{leading term} in the following sense. An important partial order on partitions takes
\[
	\lambda \leq \mu \qquad\hbox{iff}\qquad 
	\sum_{i=1}^k \lambda_i \ \leq\  \sum_{i=1}^k \mu_i \ \hbox{ for all }k.
\] 
With this ordering, $\mu\cup\nu$ is the least partition occuring with nonzero coefficient in the product of $m_\mu m_\nu$. That is, $\sym$ is \textbf{shape-filtered}: 
$(\sym)_\lambda \cdot (\sym)_\mu \subseteq \bigoplus_{\nu\geq\lambda\cup\mu} (\sym)_\nu$.

The ring $\sym$ is afforded a coalgebra structure with counit $\varepsilon : \sym \to \K$ and coproduct $\Delta : \sym_d \to \bigoplus_{k=0}^{d} \sym_{k} \otimes \sym_{d-k}$ given, respectively, by 
\[
	\varepsilon(m_{\mu}) = \delta_{\mu,0}
	\quad\hbox{ and }\quad 
	\Delta(m_{\mu}) = \sum_{\theta\cup \nu=\mu} m_{\theta} \otimes m_{{\nu}} \,.
\]
If $|\x|=\infty$, $\Delta$ and $\varepsilon$ are algebra maps, making $\sym$  a graded connected Hopf algebra.

%-----------------------------------------------------------------------------
% Begin SECTION
%-----------------------------------------------------------------------------
%\mbox{}\vskip2ex
\section{The algebra $\ncsym$ of noncommutative symmetric polynomials}
\label{sec:ncsym}

%-----------------------------------------------------------------------------
\subsection{Vector space structure of $\ncsym$}
Suppose now that $\x$ denotes a set of non-commuting variables. The algebra $T=\K\langle\x\rangle$ of noncommutative polynomials is graded by degree. A degree $d$ \textbf{noncommutative monomial} $\z\in T_d$ is simply a length $d$ ``word'':
  $$\z=z_1z_2\cdots z_d,  \quad \textrm{with each} \quad z_i\in\x.$$
In other terms, $\z$ is a function $\z:[d]\rightarrow \x$, with $[d]$ denoting the set $\{1,2,\ldots,d\}$. 
The permutation-action on $\x$ clearly extends to $T$, giving rise to the subspace $\ncsym = T^{\Sym}$ of noncommutative $\Sym$-invariants. With the aim of describing a linear basis for the homogeneous component $\ncsym_d$, we next introduce set partitions of $[d]$ and the type of a monomial $\z:[d]\to \x$.
Let $\A = \{A_1, A_2,\ldots, A_r\}$ be a set of subsets of $[d]$. Say $\A$ is a \textbf{set partition} of $[d]$, written $\A \vdash [d]$, iff $A_1\cup A_2\cup\ldots\cup A_r=[d]$, $A_i\neq\emptyset$ ($\forall i$), and $A_i\cap A_j=\emptyset$ ($\forall i\neq j$). 
The \textbf{type} $\tau(\z)$ of a degree $d$ monomial $\z:[d]\to \x$ is the set partition 
\[
	\tau(\z):=\{\z^{-1}(x) \mid  x\in\mathbf{x}\} \setminus \{\emptyset\}\quad \textrm{of}\quad [d],
\]
whose parts are the non-empty fibers of the function $\z$. For instance, 
$$
	\tau(x_1x_8x_1x_5x_8)=\{\{1,3\},\{2,5\},\{4\}\}.
$$
Note that the type of a monomial is a set partition with at most $n$ parts. 
In what follows, we lighten the heavy notation for set partitions, writing, e.g., the set partition $\{\{1,3\},\{2,5\},\{4\}\}$ as $13.25.4$. We also always order the parts in increasing order of their minimum elements.
The \textbf{shape} $\shape{\A}$ of a set partition $\A=\{A_1,A_2,\ldots,A_r\}$ is the (integer) partition  $\shape{|A_1|, |A_2|,\ldots, |A_r|}$ obtained by sorting the part sizes of $\A$ in increasing order, and its \textbf{length} $\ell(\A)$ is its number of parts ($r$).
Observing that the permutation-action is \emph{type preserving}, we are led to index the \textbf{monomial} linear basis for the space $\ncsym_d$ by set partitions:
\[
m_\A=m_{\A}(\x) := \sum_{\tau(\z)=\A,\ \z\in\x^{[d]}} \z
\]
For example, with $n=2$, we have 
$m_{1}=x_1+x_2$, 
$m_{12}=x_1^2+x_2^2$, $m_{1.2}=x_1x_2+x_2x_1$,
$ m_{123} = {x_1}^3 + {x_2}^3 $, 
$ m_{12.3} = {x_1}^2x_2 + {x_2}^2x_1 $, $ m_{13.2} = {x_1}x_2x_1 + {x_2}x_1x_2 $,
$m_{1.2.3}=0$, and so on. (We set $m_{\bs\emptyset} := 1$, taking $\bs\emptyset$ as the unique set partition of the empty set, and we agree that $m_\A = 0$ if $\A$ is a set partition with more than $n$ parts.)

%-----------------------------------------------------------------------------
\subsection{Dimension enumeration and shape grading}
Above, we determined that $\dim \ncsym_d$ is the number of set partitions of $d$ into at most $n$ parts. These are counted by the (length restricted) \textbf{Bell numbers} $B_d^{\,(n)}$. Consequently, (\ref{eq:bell_ogf}) follows from the fact that its right-hand side is the ordinary generating function for length restricted Bell numbers. See \cite[\S2]{Kla:2003}. We next highlight a finer enumeration, where we grade $\ncsym$ by shape rather than degree.

For each partition $\mu$, we may consider the subspace  $\ncsym_{\mu}$ spanned by those $m_\A$ for which $\shape{\A}=\mu$. This results in a direct sum decomposition 
$
	\ncsym_d=\bigoplus_{\mu\vdash d} \ncsym_\mu.
$ 
%with each $\ncsym_\mu$ invariant under a ``place-action'' of $\Sym_d$ introduced in Section~\ref{sec:place action} (in which we describe the $\Sym_d$-module structure of $\ncsym_\mu$). For the moment, let us concentrate on a simple dimension description. 
A simple dimension description for $\ncsym_d$ takes the form of a \textbf{shape Hilbert series} in the following manner. View commuting variables $q_i$ as marking parts of size $i$ and set $\q_\mu:=q_{\mu_1}q_{\mu_2}\cdots q_{\mu_r}$. Then
\begin{equation}\label{eq:shape hilbert}
    \Hilb{\q}{\ncsym_d}= \sum_{\mu\vdash d} \dim \ncsym_\mu\, \q_\mu, = \sum_{\A\vdash [d]} q_{\lambda(\A)}.
 \end{equation}
Here, $\q_\mu$ is a marker for set partitions of shape $\shape{\A}=\mu$ and the sum is over all partitions into at most $n$ parts. Such a shape grading also makes sense for $\sym_d$. Summing over all $d\geq0$ and all $\mu$, we get
\begin{equation}\label{eq:shape hilbert lambda}
    \Hilb{\q}{S^\Sym}= \sum_{\mu}  \q_\mu=\prod_{i\geq 1}^n \frac{1}{1-q_i}.
\end{equation}
Using classical combinatorial arguments, one finds the enumerator polynomials $\Hilb{\q}{\ncsym_d}$ are naturally collected in the \textbf{exponential generating function}
\begin{equation}\label{eq:partitions}
    \sum_{d=0}^\infty  \Hilb{\q}{\ncsym_d}\, \frac{t^d}{d!}
      =\sum_{m=0}^n\frac{1}{m!}\left(\sum_{k=1}^\infty q_k \frac{t^k}{k!}\right)^m.
\end{equation}
See \cite[Chap. 2.3]{BerLabLer:1998}, Example 13(a). For instance, with $n=3$, we have
$$	\Hilb{\q}{\ncsym_6} = q_{{6}} +6\,q_5q_1 +15\,q_4q_2
	+15\,q_4q_1^{\!2} + 10\,q_3^{\!2} +60\,q_3q_2q_1
	+15\,{q_2}^{\!3},
$$
thus $\dim \ncsym_{222} =15$ when $n\geq3$. Evidently, the $\q$-polynomials $\Hilb{\q}{\ncsym_d}$ specialize to the length restricted {Bell numbers} $B_d^{\,(n)}$ when we set all $q_k$ equal to $1$.

In view of (\ref{eq:shape hilbert lambda}), (\ref{eq:partitions}), and Theorem~\ref{thm:main}, we claim the following refinement of (\ref{eq:Hilb-t-cosym}).

\begin{coro}\label{thm:multi-graded quotient}
Sending $n$ to $\infty$, the shape Hilbert series of the space $\cosym$ is given by 
\begin{equation}\label{eq:formule shape}
	\Hilb{\q}{\cosym}=
\sum_{d\geq 0}d! \left. \exp\left(\sum_{k=1}^\infty q_k\,\frac{t^k}{k!}\right)\right|_{t^d} \,\,
\prod_{i\geq 1}\bigl(1-q_i\bigr) ,
\end{equation}
with $(\hbox{--})|_{t^d}$ standing for the operation of taking the coefficient of $t^d$.
\end{coro}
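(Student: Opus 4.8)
The plan is to upgrade the degree-graded isomorphism of Theorem~\ref{thm:main} (in its $|\x|=\infty$ form) to an isomorphism of \emph{shape}-graded vector spaces, and then to read off the shape Hilbert series by combining the factorization this yields with the known generating functions (\ref{eq:shape hilbert lambda}) and (\ref{eq:partitions}). The observation that makes the shape refinement exploitable is that the $\q$-weight is \emph{multiplicative} under union of partitions: because $\q_\mu\,\q_\nu=\q_{\mu\cup\nu}$, any decomposition of $\ncsym$ compatible with shapes forces the associated shape Hilbert series to factor as a product over the tensor factors.

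First I would verify that the decomposition $\ncsym \simeq \cosym \otimes \Lambda$ respects the shape grading, not merely the degree grading. In the $|\x|=\infty$ regime this splitting is obtained from the Hopf-algebraic theorem of Blattner--Cohen--Montgomery, so one must confirm that both the construction of $\cosym$ and the embedding of $\Lambda$ (the place-action realization of $\sym$) take place within fixed shape components. Granting this, the isomorphism is one of $\q$-graded spaces, and therefore
\[
	\Hilb{\q}{\ncsym} = \Hilb{\q}{\cosym}\cdot \Hilb{\q}{\Lambda}.
\]

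Next I would pin down the two known ingredients. Since the place-action identification $\Lambda\cong\sym$ preserves shape, (\ref{eq:shape hilbert lambda}) gives $\Hilb{\q}{\Lambda}=\prod_{i\geq 1}(1-q_i)^{-1}$. For the remaining factor, setting $n=\infty$ in (\ref{eq:partitions}) collapses the finite sum over $m$ into the full exponential, so that
\[
	\sum_{d\geq 0} \Hilb{\q}{\ncsym_d}\,\frac{t^d}{d!}
	= \exp\!\left(\sum_{k=1}^\infty q_k\,\frac{t^k}{k!}\right),
\]
whence $\Hilb{\q}{\ncsym_d}=d!\,\exp\bigl(\sum_{k\geq 1}q_k t^k/k!\bigr)\big|_{t^d}$ and $\Hilb{\q}{\ncsym}$ is the sum of these over all $d\geq 0$. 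Solving the factorization above for $\Hilb{\q}{\cosym}$ and multiplying through by $\prod_{i\geq 1}(1-q_i)$ then delivers (\ref{eq:formule shape}).

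I expect the one genuinely nontrivial point to be the first step: showing the decomposition of Theorem~\ref{thm:main} is \emph{shape}-graded rather than only degree-graded. Everything downstream is formal manipulation of generating functions, with the specialization $q_k\to 1$ recovering (\ref{eq:Hilb-t-cosym}) as a consistency check. The subtlety is that shape is a strictly finer statistic than degree and is not automatically preserved by the abstract Hopf-theoretic splitting; the real work lies in confirming that $\cosym$ may be chosen homogeneous with respect to shape, which in turn rests on the compatibility of the place-action with the part-size statistics underlying $\shape{\A}$.
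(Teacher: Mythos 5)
Your overall strategy coincides with the paper's: the corollary is obtained there precisely by observing that the isomorphism $\cosym\otimes\Lambda\to\ncsym$ of Theorem~\ref{thm:main} is compatible with shape and then combining \eqref{eq:shape hilbert lambda} with \eqref{eq:partitions} at $n=\infty$; your generating-function manipulations are exactly the paper's and are correct. The problem is the step you yourself flag as the crux, in the form you propose to verify it. The map realizing Theorem~\ref{thm:main} is $a\otimes\bar h\mapsto a\cdot\iota(\bar h)$, and this map is \emph{not} an isomorphism of shape-graded spaces, because multiplication in $\ncsym$ does not preserve shape: by \eqref{eq:multiplication rule} the product $m_{\A}\cdot m_{\B}$ is a sum over quasi-shuffles, and every quasi-shuffle in which blocks merge has shape different from $\shape{\A}\cup\shape{\B}$ (in \eqref{eq:productexample} the seven terms have shapes $2111$, $311$, $221$, $32$). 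So the verification you defer would fail as stated, and it cannot be repaired by choosing $\cosym$ differently, since the failure lies in the multiplication of $\ncsym$ itself, not in the choice of complement. (Your closing diagnosis also points at the wrong culprit: the place-action is harmless, being shape-preserving by \eqref{eq:sagan_rosas}; the obstruction is multiplicativity.)

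What is true, and what the paper actually uses (Section~\ref{sec:atomics}), is the weaker \emph{filtered} statement: the unique quasi-shuffle of $\A$ and $\B$ with no merged blocks is $\A\mrg\B$, whose shape is exactly $\shape{\A}\cup\shape{\B}$, and all other terms have strictly larger shape in the order of Section~\ref{sec:sym}, so that $m_{\A}\cdot m_{\B}=m_{\A\mrg\B}+(\hbox{strictly higher shapes})$. Both tensor factors are honestly shape-graded subspaces: $\Lambda$ by construction, and $\cosym$ because its defining condition $(\id\otimes\ab)\circ\Delta(h)=h\otimes 1$ involves only shape-graded maps and hence splits over the shape components of $h$. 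The map $a\otimes\m_\nu\mapsto a\cdot\iota(\m_\nu)$ is then a bijection which is triangular with respect to this filtration, with leading term in shape $\lambda\cup\nu$ when $a\in\cosym\cap\ncsym_\lambda$. Your observation that $\q_\lambda\q_\nu=\q_{\lambda\cup\nu}$ is precisely what makes the filtered statement sufficient: passing to leading terms yields a genuinely shape-graded isomorphism, whence $\Hilb{\q}{\ncsym}=\Hilb{\q}{\cosym}\cdot\Hilb{\q}{\Lambda}$, after which your computation finishes the proof verbatim. One further small slip: the consistency specialization recovering \eqref{eq:Hilb-t-cosym} is $q_k\mapsto t^k$, not $q_k\to 1$ (the latter collapses the grading to Bell numbers).
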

This refinement of \eqref{eq:Hilb-t-cosym} will follow immediately from the isomorphism $\cosym \otimes \Lambda \to \ncsym$ in Section \ref{sec:n=infty}, 
which is shape-preserving in an appropriate sense.
Thus we have the expansion
\begin{eqnarray*}
    \Hilb{\q}{\cosym} &=& 1+2\,q_2q_1 + 
    \left( 3\,q_3q_1+2\,{q_2}^{2} +3\,q_2{q_1}^{2} \right) \\
  &&\,\,\,\quad + \left( 4\,q_4q_1 +9\,q_3q_2 +6\,q_3{q_1}^{2} +10\,{q_2}^{2}q_1 +4\,q_2{q_1}^{3} \right)  + \cdots
\end{eqnarray*}

%-----------------------------------------------------------------------------
\subsection{Algebra and coalgebra structures of $\ncsym$}\label{subsec:bialgebra structure}
Since the action of $\Sym$ on $T$ is multiplicative, it is straightforward to see that $\ncsym$ is a subalgebra of $T$. The {\em multiplication rule} in $\ncsym$, expressing a product $m_\A \cdot m_\B$ as a sum of basis vectors $\sum_{\C} m_{\C}$, is easy to describe. Since we make heavy use of the rule later, we develop it carefully here. We begin with an example 
(digits corresponding to $\B=\textcolor{myblue}{\bs1}.\textcolor{myblue}{\bs2}$ appear in bold):
\begin{eqnarray}
	\notag m_{\textcolor{myred}{1}\textcolor{myred}{3}.\textcolor{myred}{2}} \,\bs{\cdot}\, m_{\bs{\textcolor{myblue}{1}}.\bs{\textcolor{myblue}{2}}} &=& m_{\textcolor{myred}{1}\textcolor{myred}{3}.\textcolor{myred}{2}.\bs{\textcolor{myblue}{4}}.\bs{\textcolor{myblue}{5}}} + m_{\textcolor{myred}{1}\textcolor{myred}{3}\bs{\textcolor{myblue}{4}}.\textcolor{myred}{2}.\bs{\textcolor{myblue}{5}}} + m_{\textcolor{myred}{1}\textcolor{myred}{3}\bs{\textcolor{myblue}{5}}.\textcolor{myred}{2}.\bs{\textcolor{myblue}{4}}}  \\
\label{eq:productexample} &&\quad\mbox{} +  m_{\textcolor{myred}{1}\textcolor{myred}{3}.\textcolor{myred}{2}\bs{\textcolor{myblue}{4}}.\bs{\textcolor{myblue}{5}}} + m_{\textcolor{myred}{1}\textcolor{myred}{3}.\textcolor{myred}{2}\bs{\textcolor{myblue}{5}}.\bs{\textcolor{myblue}{4}}} + m_{\textcolor{myred}{1}\textcolor{myred}{3}\bs{\textcolor{myblue}{5}}.\textcolor{myred}{2}\bs{\textcolor{myblue}{4}}} + m_{\textcolor{myred}{1}\textcolor{myred}{3}\bs{\textcolor{myblue}{4}}.\textcolor{myred}{2}\bs{\textcolor{myblue}{5}}}
\end{eqnarray}
Compare this to (\ref{eq:m21*m11}). Notice that the shapes indexing the first and last terms in (\ref{eq:productexample}) are the partitions $\shape{13.2} \cup \shape{1.2}$ and $\shape{13.2}+\shape{1.2}$. As was the case in $\sym$, one of these shapes, namely $\shape{\A}+\shape{\B}$, will always appear in the product, while appearance of the shape $\lambda(\A)\cup\lambda(\B)$ depends on the cardinality of $\x$. 

Let us now describe the multiplication rule. Given any $D\subseteq \N$ and $k\in \N$, we write $\shift{D}{k}$ for the set 
\[
	\shift{D}{k}:=\{a+k \mid a\in D\}.
\]
By extension, for any set partition $\A=\{A_1,A_2,\ldots,A_r\}$ we set 
$
	\shift{\A}{k}:=\{\shift{A_1}{k},\shift{A_2}{k},$ $\ldots,\shift{A_r}{k}\}.
$
Also, we set $\A_{\widehat{i}} := \A \setminus \{A_i\}$. 
Next, if $\mathcal X$ is a collection of set partitions of $D$, and $A$ is a set disjoint from $D$, we extend $\mathcal X$ to partitions of $A\cup D$ by the rule
$$
	A\hits \mathcal X := \bigcup_{\B\in\mathcal X} \{A\} \cup \B .
$$
Finally, given partitions $\A=\{A_1,A_2,\ldots,A_r\}$ of $C$ and $\B=\{B_1,B_2,\ldots,B_s\}$ of $D$ (disjoint from $C$), their \textbf{quasi-shuffles} $\A\nshuf \B$ are the set partitions of $C\cup D$ recursively defined by the rules:
\begin{itemize}\itemsep=2pt
   \item[\bb] $\A\nshuf \bs\emptyset = \bs\emptyset\nshuf \A := \A$,  where $\bs\emptyset$ is the unique set partition of the empty set;
   \item[\bb] $\displaystyle \A\nshuf \B := \bigcup_{i=0}^s\, (A_1\cup {B_i}) \hits 
	\Bigl(\A_{\widehat1}\nshuf {(\B_{\widehat{i}})}\Bigr)$, %\\[4pt]
taking $B_0$ to be the empty set.
 \end{itemize}
If $\A\vdash[c]$ and $\B\vdash[d]$, we abuse notation and write $\A \nshuf \B$ for $\A \nshuf \shift{\B}{c}$. As shown in \cite[Prop. 3.2]{BRRZ:2008}, the multiplication rule for $m_{\A}$ and  $m_{\B}$ in $ \ncsym$ is
\begin{equation}\label{eq:multiplication rule}
    m_{\A} \cdot m_{\B} = \sum_{\C \in {\A}\,\snshuf\,{\B}} m_{\C}\,.
 \end{equation}
The subalgebra $\ncsym$, like its commutative analog, is freely generated by certain monomial symmetric polynomials $\{m_\A\}_{\A\in \mathcal A}$, where $\mathcal A$ is some carefully chosen collection of set partitions. This is the main theorem of Wolf \cite{Wol:1936}. We use two such collections later, our choice depending on whether or not $|\x|<\infty$. 
%They come from isolating the extremal terms of $\A\nshuf\B$. As in the example, they will have underlying shape $\shape{\A} + \shape{\B}$ and $\shape{\A} \cup \shape{\B}$.

The operation $\shift{(\hbox{--})}{k}$ has a left inverse called the \textbf{standardization} operator and denoted by ``$\std{(\hbox{--})}$''. It maps set partitions $\A$ of any cardinality $d$ subset $D\subseteq \N$ to set partitions of $[d]$, by defining $\std{\A}$ as the pullback of $\A$ along the unique increasing bijection from $[d]$ to $D$. For example, $\std{(18.4)} = 13.2$ and $\std{(18.4.67)}=15.2.34$. The coproduct $\Delta$ and counit $\varepsilon$ on $\ncsym$ are given, respectively, by
\[
	\Delta(m_{\A}) = \sum_{\B\djcup\C=\A} m_{\std{\B}} \otimes m_{\std\C}\
	\qquad \hbox{and} \qquad
	\varepsilon(m_{\A}) = \delta_{\A,\bs\emptyset},
\]
where $\B\djcup\C=\A$ means that $\B$ and $\C$ form complementary subsets of $\A$. 
In the case $|\x|=\infty$, the maps $\Delta$ and $\varepsilon$ are algebra maps, making $\ncsym$ a graded connected Hopf algebra.

%-----------------------------------------------------------------------------
% Begin SECTION
%-----------------------------------------------------------------------------
%\mbox{}\vskip2ex
\section{The place-action of $\Sym$ on $\ncsym$}
\label{sec:place action}

%-----------------------------------------------------------------------------
\subsection{Swapping places in $T_d$ and $\ncsym_d$}
On top of the permutation-action of the symmetric group $\Sym_\x$ on $T$, we also consider the ``{place-action}'' of $\Sym_d$ on the degree $d$ homogeneous component $T_d$.  Observe that  the permutation-action of $\sigma\in\Sym_\x$ on a monomial $\z$ corresponds to the functional composition
\[
	\sigma\circ \z:[d]\stackrel{\z}{\longrightarrow} \x\stackrel{\sigma}{\longrightarrow}\x.
\]
By contrast, the \textbf{place-action}   of  $\rho\in \Sym_d$ on   $\z$ gives the monomial
\[
	\z\circ \rho:[d]\stackrel{\rho}{\longrightarrow}[d]\stackrel{\z}{\longrightarrow} \x ,
\]
composing $\rho$ on the right with $\z$. In the linear extension of this action to all of $T_d$, it is easily seen that $\ncsym_d$ (even each $\ncsym_\mu$) is an invariant subspace of $T_d$. Indeed, for any set partition $\A=\{A_1,A_2,\ldots,A_r\}\vdash[d]$ and any $\rho\in\Sym_d$, one has \begin{equation}\label{eq:sagan_rosas}
   m_{\A}\cdot \rho = m_{\rho^{-1} \cdot \A} 
\end{equation}
(see \cite[\S2]{RosSag:2006}), where as usual 
$
  \rho^{-1}\cdot\A:=\{\rho^{-1}(A_1),\rho^{-1}(A_2),\ldots,\rho^{-1}(A_r)\}.
$ 

%-----------------------------------------------------------------------------
\subsection{The place-action structure of $\ncsym$}
Notice that the action in (\ref{eq:sagan_rosas}) is shape-preserving and transitive on set partitions of a given shape (i.e., $\ncsym_\mu$ is an $\Sym_d$-submodule of $\ncsym_d$ for each $\mu\vdash d$). It follows that there is exactly one copy of the trivial $\Sym_d$-module inside $\ncsym_\mu$ for each $\mu\vdash d$, that is, a basis for the place-action invariants in $\ncsym_d$ is indexed by partitions.
We choose as basis the polynomials 
\begin{equation}\label{eq:right_inv}
	\m_\mu:=\frac{1}{(\dim{\ncsym_\mu}){\scriptscriptstyle\,}\mu^{\bs !}}\sum_{\shape{\A}=\mu} m_\A,
\end{equation}
with $\mu^{\bs!} = a_1! a_2! \cdots$ whenever $\mu=1^{a_1}2^{a_2}\cdots$. The rationale for choosing this normalizing coefficient %on the right-hand side of (\ref{eq:right_inv}) 
will be revealed in (\ref{eq:abel}).
 
To simplify our discussion of the structure of $\ncsym$ in this context, 
we will say that $\Sym$ acts on $\ncsym$ rather than being fastidious about underlying in each situation that individual $\ncsym_d$'s are being acted upon on the right by the corresponding group $\Sym_d$. We denote the set $\ncsym^{\Sym}$ of \textbf{place-invariants} by $\Lambda$ in what follows. To summarize,
\begin{equation}\label{eq:right_invts}
\Lambda = \mathrm{span}\!\left\{\textbf{m}_\mu : \mu\hbox{ a partition of }d,\, d\in\N\right\}.
\end{equation}
The pair $(\ncsym,\Lambda)$ begins to look like the pair $(S,\sym)$ from the introduction. This was the observation that originally motivated our search for Theorem~\ref{thm:main}. 
%We asked, is there a subspace $\mathcal C$ of $\ncsym$ so that $\ncsym \simeq \cosym \otimes \Lambda$?

We next decompose $\ncsym$ into irreducible place-action representations. Although this can be worked out for any value of $n$, the results are more elegant when we send $n$ to infinity. Recall that the \textbf{Frobenius characteristic} of a $\Sym_d$-module $\mathcal{V}$ is a symmetric function 
\[
	\mathrm{Frob}(\mathcal{V}) = 
	\sum_{\mu\vdash d} v_\mu \,s_\mu,
\]
where $s_\mu$ is a Schur function (the character of ``the'' irreducible $\Sym_d$ representation $\mathcal V_\mu$ indexed by $\mu$) and $v_\mu$ is the multiplicity of $\mathcal V_\mu$ in $\mathcal{V}$. To reveal the $\Sym_d$-module structure of $\ncsym_\mu$, we use (\ref{eq:sagan_rosas}) and techniques from the theory of combinatorial species.

\begin{prop}\label{prop:charac_ncsym}
For a partition $\mu=1^{a_1}2^{a_2}\cdots k^{a_k}$, having $a_i$ parts of size $i$, we have
\begin{equation}\label{eq:charac_ncsym}
  \mathrm{Frob}(\ncsym_\mu)=
      h_{a_1}[h_1]\,h_{a_2}[h_2]\cdots h_{a_k}[h_k],
  \end{equation}
with $f[g]$ denoting plethysm of $f$ and $g$, and $h_i$ denoting the $i^{\hbox{\small th}}$ homogeneous symmetric function.% on $\x$, $|\x|\geq a_1 + \cdots + a_k$.
\end{prop}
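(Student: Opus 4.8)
The plan is to recognize $\ncsym_\mu$ explicitly as a permutation module and then read off its Frobenius characteristic via the cycle-index homomorphism from species to symmetric functions. First I would use \eqref{eq:sagan_rosas} to pin down the module structure. Since $m_{\A}\cdot\rho = m_{\rho^{-1}\cdot\A}$, and since (as already observed before the statement) the place-action is shape-preserving and transitive on set partitions of a fixed shape, the space $\ncsym_\mu$ is, as a $\Sym_d$-module with $d=|\mu|$, nothing but the linearization $\K[\Pi_\mu]$ of the natural $\Sym_d$-action on the set $\Pi_\mu$ of set partitions of $[d]$ of shape $\mu$. Equivalently $\ncsym_\mu \cong \mathrm{Ind}_{H}^{\Sym_d}\mathbf 1$, where $H$ is the stabilizer of a fixed $\A$ with $\shape{\A}=\mu$. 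A short computation identifies this stabilizer: a permutation fixes $\A$ iff it maps each block onto a block of the same size, so $H \cong \prod_{i=1}^{k}\bigl(\Sym_i \wr \Sym_{a_i}\bigr)$, the subgroup shuffling elements inside each block and permuting the $a_i$ blocks of size $i$ among themselves.

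I would then compute $\mathrm{Frob}$ on the species side. The bridge is the standard fact that the character of $\K[\Pi_\mu]$ evaluated at $\rho$ counts the shape-$\mu$ set partitions fixed by $\rho$, so that $\mathrm{Frob}(\ncsym_\mu)$ is exactly the degree-$d$ part of the cycle index series $Z_{F_\mu}$ of the species $F_\mu$ whose structures on a finite set are its partitions into $a_i$ blocks of size $i$ for every $i$. Next I would factor this species as the product $F_\mu = \prod_{i=1}^{k} E_{a_i}[E_i]$, where $E_j$ denotes the species of $j$-element sets: the inner composition $E_{a_i}[E_i]$ builds $a_i$ blocks each carrying an inert $i$-set structure, the outer $E_{a_i}$ records the unordered collection of these blocks, and the product over distinct part sizes $i$ splits $[d]$ into its size-$i$ portions. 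Applying the homomorphism theorem for cycle indices — products of species map to products of symmetric functions and species composition maps to plethysm, together with $Z_{E_j}=h_j$ — yields $Z_{F_\mu}=\prod_i h_{a_i}[h_i]$, which is already homogeneous of degree $d$, giving precisely \eqref{eq:charac_ncsym}.

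The one point demanding care — the crux of the argument — is the passage from the wreath-product/stabilizer structure to the plethysms $h_{a_i}[h_i]$: one must verify that the composition $E_{a_i}[E_i]$ faithfully models ``$a_i$ indistinguishable blocks of size $i$,'' i.e. that its cycle index is genuinely the plethysm $h_{a_i}[h_i]$, equivalently that $\mathrm{Ind}_{\Sym_i\wr\Sym_{a_i}}^{\Sym_{i\,a_i}}\mathbf 1$ has characteristic $h_{a_i}[h_i]$. Granting this classical identity and the multiplicativity of cycle indices under the product of species, the remaining steps are bookkeeping.

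As a sanity check I would verify small cases. For $\mu=(2,2)$ the set partitions of shape $\mu$ are $12.34,\,13.24,\,14.23$, the stabilizer of any one is $\Sym_2\wr\Sym_2$ (of index $3$ in $\Sym_4$), and $h_2[h_2]=s_4+s_{22}$ has dimension $1+2=3$, matching the three basis elements directly; the degenerate cases $\mu=(2)$ and $\mu=(1,1)$ likewise return $h_1[h_2]=h_2$ and $h_2[h_1]=h_2$, each a one-dimensional trivial module, as expected.
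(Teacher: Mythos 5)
Your proof is correct, and it shares its engine with the paper's own argument---the observation that the permutation character of $\ncsym_\mu$ turns its Frobenius characteristic into a cycle index, plus the dictionary ``species composition $=$ plethysm''---but the execution is genuinely different. The paper argues globally: it weights the full species of set partitions, realized as $\mathbf{E}\circ\mathbf{E}^*$, by markers $q_i$ recording block sizes, computes the entire weighted cycle index $Z_{\mathbf{P}}(\bs q)$ in closed exponential form, extracts the coefficient of $\q_\mu$, and only then rewrites the resulting factors $\sum_{\lambda\vdash a_i}p_\lambda/z_\lambda$ and $\sum_{\nu\vdash i}p_\nu/z_\nu$ as complete homogeneous functions. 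You argue locally: fixing $\mu$ from the outset, you factor the species of shape-$\mu$ partitions as $\prod_i E_{a_i}[E_i]$ and read off \eqref{eq:charac_ncsym} directly from $Z_{E_n}=h_n$ together with multiplicativity and the composition rule, with no generating-function manipulation or coefficient extraction at all. You also supply something the paper omits: the identification $\ncsym_\mu\cong\mathrm{Ind}_H^{\Sym_d}\mathbf{1}$ with stabilizer $H\cong\prod_i\bigl(\Sym_i\wr\Sym_{a_i}\bigr)$, which reduces the crux of the matter to the classical fact that the trivial module induced from the wreath product $\Sym_i\wr\Sym_{a_i}$ has characteristic $h_{a_i}[h_i]$ (see \cite[I.8]{Mac:1995}); this gives a purely representation-theoretic verification of the key plethysm step, independent of the species formalism. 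What the paper's heavier computation buys in exchange is the full weighted series $Z_{\mathbf{P}}(\bs q)$, which treats all shapes simultaneously and refines the shape generating function \eqref{eq:partitions}; your argument is leaner but proves exactly the proposition and nothing more.
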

Recall that the \textbf{plethysm} $f[g]$ of two symmetric functions is obtained by linear and multiplicative extension of the rule $p_k[p_\ell]:=p_{k\,\ell}$, where the $p_k$'s denote the usual {power sum} symmetric functions (see \cite[I.8]{Mac:1995} for notation and details). 
\begin{proof}
First recall that the cycle index series for the combinatorial species $\mathbf{P}$ of set partitions is defined as
\[
	Z_{\mathbf{P}} = \sum_{n\geq 0}
        	\sum_{\mu\vdash n}
		\mathrm{fix}(\sigma_\mu) \frac{p_\mu}{z_\mu} \,,
\]
with $\sigma_\mu$ any permutation having cycle structure given by $\mu$, and $\mathrm{fix}(\sigma_\mu)$ denoting the number of fixed points of the action of $\sigma_\mu$ on partitions.
This coincides with $\mathrm{Frob}(\K\mathbf{P})$, since the values of the character of $\K \mathbf{P}$ are the corresponding number of fixed points. Next, realize $\mathbf{P}$ as the composition $\mathbf{E} \circ \mathbf{E}^*$ \cite[Chap 1.4]{BerLabLer:1998} and enumerate $\mathbf{E}^*$ by giving a $\bs q$-weight to each set, according to its cardinality. 
Standard cycle index series identities (including $Z_{\mathbf{E} \circ \mathbf{E}^*}=Z_{\mathbf{E}}[Z_\mathbf{E} \circ Z_{\mathbf{E}^*}]$) then give
\begin{gather*}
	Z_{\mathbf{P}}(\bs q) =\exp \sum_{k\geq 1} \frac1k\biggl(\exp\Bigl(\sum_{j\geq1} q_j^k\,\frac{p_{jk}}j \Bigr)-1\biggr)   
\end{gather*}
(cf. Example 13(c) of Chapter 2.3 in \cite{BerLabLer:1998}). Finally, collecting the terms of weight $\bs q_\mu$ gives $\mathrm{Frob}(\ncsym_\mu)$. For $\mu=1^{a_1}2^{a_2}\cdots k^{a_k}$, we get
\[
	 Z_{\mathbf{P}}(\bs q)\Bigr|_{\bs q_\mu} = \prod_{i=1}^k
		\Bigl(\sum_{\lambda\vdash a_i} \frac{p_\lambda}{z_\lambda}\Bigr)\bigl[\sum_{\nu\vdash i} \frac{p_\nu}{z_\nu} \bigr],
\]
where $z_\mu$ is Macdonald's constant $1^{a_1}a_1!\,2^{a_2} a_2!\,\cdots k^{a_k} a_k!$ and $(-)[-]$ is plethysm. Standard identities between the $h_k$'s and $p_k$'s finish the proof.
\end{proof}
As an example, we consider $\mu=222=2^3$. Since
$$\displaystyle
	h_2=\frac{p_1^2}{2}+\frac{p_2}{2} \qquad \mathrm{and}\qquad
         h_3=\frac{p_1^3}{6}+\frac{p_1p_2}{2}+\frac{p_3}{3},
$$
a plethysm computation (and a change of basis) gives
\begin{eqnarray*}
    h_3[h_2]&=&
	\frac{p_1^3}{6}\left[\frac{p_1^2}{2}+\frac{p_2}{2}\right]
	+\frac{p_1p_2}{2}\left[\frac{p_1^2}{2}+\frac{p_2}{2}\right]
	+\frac{p_3}{3}\left[\frac{p_1^2}{2}+\frac{p_2}{2}\right]\\[4pt]
   &=&\frac{1}{6}\left(\frac{p_1^2}{2}+\frac{p_2}{2}\right)^3
	+\frac{1}{2}\left(\frac{p_1^2}{2}+\frac{p_2}{2}\right)
	     \left(\frac{p_2^2}{2}+\frac{p_4}{2}\right)
	+\frac{1}{3}\left(\frac{p_3^2}{2}+\frac{p_6}{2}\right)\\[4pt]
    &=&s_6+s_{42}+s_{222}.
\end{eqnarray*}
That is, $\ncsym_{222}$ decomposes into three irreducible components, with the trivial representation $s_6$ being the span of $\m_{222}$ inside $\Lambda$.

%-----------------------------------------------------------------------------
\subsection{$\Lambda$ meets $\sym$}
\label{sec:newinvariants}
We begin by explaining the choice of normalizing coefficient in (\ref{eq:right_inv}). Rosas and 
Analyzing the \textbf{abelianization} map $\ab: T \rightarrow S$ (the map making the variables $\x$ commute), Rosas and Sagan \cite[Thm. 2.1]{RosSag:2006} show that $\ab\vert_\ncsym$ satisfies: 
\begin{itemize}\itemsep=2pt
\item[\bb] $\ab(\ncsym) = \sym$, and 
\item[\bb] $\ab(m_{\A})$ is a multiple of $m_{\shape{\A}}$ depending only on $\mu=\shape{\A}$, more precisely 
\begin{equation}\label{eq:abel}
	\ab(\mathbf{m}_\mu)=m_\mu. 
\end{equation} 
\end{itemize}
Formula (\ref{eq:abel}) suggests that a natural right-inverse to $\ab\vert_{\ncsym}$ is given by
\begin{equation}\label{def:iota}
    \iota:\sym\hookrightarrow \ncsym,\quad
       \hbox{with }\quad \iota(m_\mu):= \m_\mu.
 \end{equation}
This fact, combined with the observation that $\iota(\sym) = \Lambda$, affords a quick proof of Theorem \ref{thm:main} when $|\x|=\infty$. We explain this now.
%The isomorphism we construct for $n<\infty$ still uses the map $\iota$, but in a less essential way.

%-----------------------------------------------------------------------------
% Begin SECTION
%-----------------------------------------------------------------------------
% footnotes, etc.
\def\usualaction{Whereas, by definition, the first action of $\Sym$ fixes each $m_\A$ ($\sigma\actv m_\A = m_\A$).}
%-----------------------------------------------------------------------------
%\mbox{}\vskip2ex
\section{The coinvariant space of $\ncsym$ (Case: \ $|\x|=\infty$)}\label{sec:n=infty}

%-----------------------------------------------------------------------------
\subsection{Quick proof of main result}
%Recall that both $\ncsym$ and $\sym$ are coalgebras. 
When $|\x|=\infty$, the pair of maps $(\ab, \iota)$ have further properties: the former is a Hopf algebra map and the latter is a coalgebra map \cite[Props. 4.3 \& 4.5]{BRRZ:2008}. 
%$
%    \Delta_{\ncsym}\circ\iota = (\iota\otimes\iota)\circ\Delta_{\sym}.
%$
Together with \eqref{eq:abel} and \eqref{def:iota}, these properties make $\iota$ a {\bf coalgebra splitting} of $\ab: \ncsym \to \sym \to 0$. A theorem of Blattner, Cohen, and Montgomery immediately gives our main result in this case.

\begin{theo}[{\cite{BlaCohMon:1986}, Thm. 4.14}]\label{thm:crossedproduct} 
If $H \stackrel{\pi}{\longrightarrow} \overline{H} \rightarrow 0$ is an exact sequence of Hopf algebras that is split as a coalgebra sequence, and
% $\Delta_{H}$ is cocommutative and 
the splitting map $\iota$ satisfies $\iota(\bar{1}) = 1$, then $H$ is isomorphic 
%as a Hopf algebra 
%as an algebra 
to a \emph{crossed product} $A \cp{} \overline{H}$, where $A$ is the \emph{left Hopf kernel} of $\pi$.
\end{theo}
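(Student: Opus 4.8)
The plan is to recognize this as an instance of the equivalence between \emph{cleft} comodule-algebra extensions and crossed products, and to reconstruct the crossed-product data directly from the splitting $\iota$. First I would observe that the surjection $\pi$ makes $H$ into a $\overline{H}$-comodule algebra via the coaction $\rho=(\pi\otimes\id)\circ\Delta$, and that $A=\LHK\pi$ is precisely the subalgebra of coinvariants for $\rho$ (those $h$ with $\rho(h)=1\otimes h$). The hypotheses on $\iota$ say exactly that it is a unital coalgebra section of $\pi$; since $\pi\circ\iota=\id$ and $\iota$ is a coalgebra map, $\iota$ is automatically a $\overline{H}$-comodule map. The whole theorem will then follow once I show $\iota$ is \emph{cleft}, i.e.\ invertible for the convolution product $*$ on $\mathrm{Hom}(\overline{H},H)$.

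Convolution invertibility is the first concrete step, and it is where the antipode enters. Because $\iota$ is a coalgebra map into the Hopf algebra $H$, I claim its convolution inverse is $\iota^{-1}=S_H\circ\iota$: indeed
$$\sum \iota(\bar h_{(1)})\,S_H\bigl(\iota(\bar h_{(2)})\bigr) = \sum \iota(\bar h)_{(1)}\,S_H\bigl(\iota(\bar h)_{(2)}\bigr)=\varepsilon_H(\iota(\bar h))\,1 = \varepsilon(\bar h)\,1,$$
using that $\iota$ is a coalgebra map in the first and last equalities, and the other side is symmetric. With $\iota^{-1}$ in hand I would define the reconstructed crossed-product data by the standard cleft formulas: the weak action $\bar h\rightharpoonup a=\sum \iota(\bar h_{(1)})\,a\,\iota^{-1}(\bar h_{(2)})$ and the cocycle $\sigma(\bar h\otimes\bar k)=\sum \iota(\bar h_{(1)})\,\iota(\bar k_{(1)})\,\iota^{-1}(\bar h_{(2)}\bar k_{(2)})$. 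A short coaction computation (using that $\pi$ is an algebra map, $\pi\circ\iota=\id$, and hence $\pi\circ\iota^{-1}=S_{\overline H}$) shows both land in $A$, and the normalizations $\bar1\rightharpoonup a=a$ and $\sigma(\bar h\otimes\bar1)=\sigma(\bar1\otimes\bar h)=\varepsilon(\bar h)1$ follow from $\iota(\bar1)=1$.

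Next I would verify that $(A,\rightharpoonup,\sigma)$ satisfies the crossed-product axioms — the measuring identities, the cocycle condition, and the twisted-module condition — each of which unwinds in Sweedler notation into an identity provable from associativity in $H$, the coalgebra-map property of $\iota$, and the two-sided convolution-inverse property of $\iota^{-1}$. Granting these, the crossed product $A\cp{\sigma}\overline{H}$ is a well-defined associative algebra, and I would finish by checking that $\Phi\colon A\cp{\sigma}\overline{H}\to H$, $a\otimes\bar h\mapsto a\,\iota(\bar h)$, is an algebra isomorphism. That $\Phi$ is an algebra map is a direct comparison of the twisted multiplication on the left with multiplication in $H$ on the right; bijectivity is exhibited by the explicit inverse $h\mapsto\sum h_{(1)}\,\iota^{-1}(\pi(h_{(2)}))\otimes\pi(h_{(3)})$, whose first tensor factor is coinvariant and hence lies in $A$.

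I expect the main obstacle to be the bijectivity of $\Phi$ together with the genuine convolution-invertibility of $\sigma$: these are the points at which both standing hypotheses — that the sequence is split \emph{as coalgebras} and that the splitting is \emph{unital} — are indispensable, since they are what force $H$ to be free over $A$ on the image of $\iota$ and let $\iota^{-1}$ recover the missing factors. The verification of the cocycle and twisted-module conditions is purely computational and, while tedious, presents no conceptual difficulty once $\iota^{-1}$ is available.
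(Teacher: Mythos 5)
This theorem is not proved in the paper at all: it is imported from Blattner--Cohen--Montgomery, and the paper only records, in passing, that the resulting isomorphism is $a\otimes \overline{h}\mapsto a\cdot\iota(\overline{h})$. Your reconstruction is correct and is essentially the argument of the cited source: a unital coalgebra section $\iota$ of $\pi$ is automatically a comodule map with convolution inverse $S_H\circ\iota$ (your computation of this is right), hence the extension is cleft, and the standard cleft data (weak action, cocycle) produce the crossed product, with exactly the isomorphism $a\otimes\overline{h}\mapsto a\,\iota(\overline{h})$ and the inverse $h\mapsto\sum h_{(1)}\iota^{-1}(\pi(h_{(2)}))\otimes\pi(h_{(3)})$ that you wrote. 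One detail to fix: you introduce the coaction as $\rho=(\pi\otimes\id)\circ\Delta$ with coinvariance condition $\rho(h)=\bar{1}\otimes h$, whereas the paper's left Hopf kernel is $\{h\in H : (\id\otimes\pi)\circ\Delta(h)=h\otimes\bar{1}\}$, i.e.\ the coinvariants of the \emph{right} coaction $(\id\otimes\pi)\circ\Delta$; in general these two subalgebras differ, but your later formulas (the comodule property of $\iota$ and the coinvariance of the first tensor factor of your inverse map) are already written in the right-coaction convention, so only that one line needs correcting, not the argument.
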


For the technical definition of {crossed products}, we refer the reader to \cite[\S 4]{BlaCohMon:1986}. We mention only that: (i) the crossed product $A \cp{} \overline{H}$ is a certain algebra structure placed on the tensor product $A \otimes \overline{H}$; and (ii) the \textbf{left Hopf kernel} is the subalgebra
$$
	A := \{h\in H : (\id \otimes \pi)\circ \Delta(h) = h \otimes \overline1 \}.
$$
That said, the coinvariants $\cosym$ we seek evidently form the left Hopf kernel of $\ab$.
Before setting off to describe $\cosym$ more explicitly, we point out that $\cosym$ is graded (since the maps $\Delta$, $\id$, and $\ab$ are graded) and so is the map $\cosym \cp{} \Lambda \stackrel{\simeq}{\longrightarrow} \ncsym$ given in the proof of Theorem \ref{thm:crossedproduct} (which is simply $a\otimes \overline h \mapsto a\cdot\iota(\overline h)$). In particular, Theorem \ref{thm:main} follows immediately from this result.

%-----------------------------------------------------------------------------
\subsection{Atomic set partitions.}\label{sec:atomics}
Recall the main result of Wolf \cite{Wol:1936} that $\ncsym$ is freely generated by some collection of polynomials. We announce our first choice for this collection now, following the terminology of \cite{BerZab:1}. 
Let $\Pi$ denote the set of all set partitions (of $[d]$, $\forall\,d\geq0$). We introduce the \textbf{atomic set partitions} $\atms$. A set partition $\A=\{A_1,A_2,\ldots,A_r\}$ of $[d]$ is \emph{atomic} if there {does not} exist a pair $(s,c)$ $\,(1\leq s<r, 1\leq c<d)$ such that $\{A_1,A_2,\ldots,A_s\}$ is a set partition of $[c]$. Conversely, $\A$ is not atomic if there are set partitions $\B$ of $[d']$ and $\C$ of $[d'']$ splitting $\A$ in two: $\A = \B\cup\shift{\C}{d'}$. We write $\A=\B \mrg \C$ in this situation. A \textbf{maximal splitting} $\A = \A' \mrg \A'' \mrg \cdots \mrg \A^{(r)}$ of $\A$ is one where each $\A^{(i)}$ is atomic. For example, the partition $17.235.4.68$ is atomic, while $12.346.57.8$ is not. The maximal splitting of the latter would be $12 \mrg 124.35 \mrg 1$, but we abuse notation and write $12 \mrg 346.57 \mrg 8$ to improve legibility.

It is proven in \cite{BerZab:1} that $\ncsym$ is freely generated by the atomic polynomials. To get a better sense of the structure, we order $\Pi$ by giving $\atms$ a total order ``$\prec$'' and then extending lexicographically. Given two atomic set partitions $\A$ and $\B$, we demand that $\A \prec \B$ if $\A \vdash [c]$ and $\B \vdash [d]$ with $c<d$. In case $\A,\B$ are partitions of the same set $[d]$,  any ordering will do for the current purpose\dots\ one interesting choice is to order $\A$ and $\B$ by ordering lexicographically their associated \emph{restricted growth functions} (see Section \ref{sec:rhymes}): if $\A=\{A_1,A_2,\ldots,A_r\} \vdash [d]$, define $\word(\A)\in \N^d$ by
\[
	\word(\A)=w_1w_2\cdots w_d,\quad\hbox{ with }\quad w_i:=k \iff  i\in A_k. 
\]
For example, $\word(\textcolor{myblue}{13}.\textcolor{myred}{2}) = \textcolor{myblue}{1}\textcolor{myred}{2}\textcolor{myblue}{1}$ and $\word(\textcolor{myblue}{17}.\textcolor{myred}{235}.\textcolor{mycyan}{4}.\textcolor{mymagenta}{68}) = \textcolor{myblue}{1}\textcolor{myred}{2}\textcolor{myred}{2}\textcolor{mycyan}{3}\textcolor{myred}{2}\textcolor{mymagenta}{4}\textcolor{myblue}{1}\textcolor{mymagenta}{4}$. The following chain of set partitions of shape $3221$ should adequately illustrate our total ordering on $\atms$:
\[
1\mrg 23 \mrg 45 \mrg 678 \prec 13.2 \mrg 456 \mrg 78 \prec 13.24 \mrg 578.6 \prec 14.23 \mrg 578.6 \prec 17.235.4.68 \prec 17.236.4.58.
\]
In fact, $1\mrg 23 \mrg 45 \mrg 678$ is the unique minimal element of $\Pi$ of shape $3221$. 
%It is the set partition to which we associate $\mathbf{m}_{3221}$ in the decomposition $\ncsym=\cosym \otimes \Lambda$, which explains our convention of arranging partition's parts in increasing order of their smallest elements.

Define the \textbf{leading term} of a sum $\sum_\C \alpha_\C\, m_\C$ to be the monomial $m_{\C_0}$ such that $\C_0$ is lexicographically least among all $\C$ with $\alpha_\C \neq 0$. Combined with (\ref{eq:multiplication rule}), our choice for $\prec$ makes it clear that the leading term of $m_{\A} \cdot m_\B$ is $m_{\A\mrg \B}$. 
That is, multiplication in $\ncsym$ is {shape-filtered.} Since the left Hopf kernel $\cosym$ is a subalgebra, it is shape-filtered as well. 
%(in fact, it can be presented with shape-homogeneous generators, see the discussion following  Proposition \ref{thm:zero-sum prims}). 
Finally, the isomorphism $\cosym \cp{} \Lambda \stackrel{\simeq}{\longrightarrow} \ncsym$ constructed in the proof of Theorem \ref{thm:crossedproduct} is also shape-filtered. These facts give Corollary \ref{thm:multi-graded quotient} immediately.

%-----------------------------------------------------------------------------
\subsection{Explicit description of the Hopf algebra structure of $\cosym$}\label{subsec: Hopf}
We begin by partitioning $\atms$ into two sets according to length,
\[
\atms_{\flat} := \left\{\, \A\in\atms : \ell(\A)=1 \right\}
\qquad\hbox{and}\qquad
\atms_{\sharp} := \left\{\, \A\in\atms : \ell(\A)>1\right\}.
\]
It is easy to find elements of the left Hopf kernel $\cosym$. For instance, if $\A$ and $\B$ belong to $\atms_\flat$, then the Lie bracket $[m_{\A}, m_{\B}]$ belongs to $\cosym$. Indeed,
\begin{align*}
	\Delta\left([m_{\A}, m_{\B}]\right) &\ =\  \Delta\left(m_{\A\mrg\B} - m_{\B\mrg\A} \right) \\
	&\ =\ {m}_{\A\mrg\B}\otimes 1 \ +\  m_{\A} \otimes m_{\B} \ +\  m_{\B} \otimes m_{\A} \ +\  1\otimes {m}_{\A\mrg\B}  \\
	&\phantom{\ =\ \ \ } - {m}_{\B\mrg\A}\otimes1 \ -\  m_{\B} \otimes m_{\A} \ -\  m_{\A} \otimes m_{\B} \ -\  1\otimes {m}_{\B\mrg\A}  \\	
	&\ =\  \left(m_{\A\mrg\B} - m_{\B\mrg\A} \right)\otimes1 +  1\otimes\left(m_{\A\mrg\B} - m_{\B\mrg\A} \right), 
\end{align*}
and
\[
	(\id\otimes\ab)\circ\Delta\left([m_{\B}, m_{\B}]\right) \ =\  
	[m_{\A}, m_{\B}]\otimes1 +  1\otimes0.
\]
Similarly, the sum of monomials $m_{13.2} - m_{12.3}$ (leading term indexed by $13.2\in\atms_\sharp$) belongs to $\cosym$. These two simple examples essentially exhaust the different ways in which an element can belong to $\cosym$. The following discussion makes this precise.

%Indeed, 
%\begin{align*}
%\Delta (\tilde{m}_{13.2}) \ \  &= \phantom{-} 1 \otimes {m}_{13.2} \ +\  m_{12} \otimes m_{1} \ +\  m_{1} \otimes m_{12} \ +\  {m}_{13.2} \otimes 1  \\
%&\phantom{=} - 1 \otimes {m}_{12.3} \ -\  m_{12} \otimes m_{1} \ -\  m_{1} \otimes m_{12} \ -\  {m}_{12.3} \otimes 1 \\
%&=\phantom{-} 1\otimes \tilde{m}_{13.2} \ +\  \tilde{m}_{13.2}\otimes 1.
%\end{align*}
%We conclude that 
%$
%	(\id \otimes \ab)\circ\Delta(\tilde{m}_{13.2})  \,=\,
%	\tilde{m}_{13.2} \otimes {1}.
%$ 
%In other terms, $\tilde{m}_{13.2} \in\cosym$. %In fact, the calculation shows that the set of differences $\{M_{\alpha} - M_{\setpt(\shape{\alpha})} : \alpha \in \Pi_{\kparts{2}} \}$ belongs to $\coinv$.

It is proven in \cite{HivNovThi:1} (and independently, in \cite{BerZab:1}) that $\ncsym$ is freely and co-freely generated by the {atomic monomials} $\bigl\{m_\A | \A\in\atms \bigr\}$. A classical theorem of Milnor and Moore \cite{MilMoo:1965} then guarantees that $\ncsym$ is isomorphic to the universal enveloping algebra $\mathfrak{U}(\mathfrak{L}(\atms))$ of the free Lie algebra $\mathfrak{L}(\atms)$ on the set $\atms$. The latter is generated as an algebra by the special primitive elements $\atms$. In the isomorphism $\mathfrak{U}(\mathfrak{L}(\atms)) \stackrel{\simeq}{\longrightarrow} \ncsym$, one obviously maps $\A\in\atms_\flat$ to $m_\A$, for these monomials are already primitive. The choice of where to send $\A\in\atms_\sharp$ is the subject of the next proposition.

\begin{prop}\label{thm:zero-sum prims}
For each $\A\in\atms_\sharp$, there is a primitive element $\tilde m_\A$ of $\ncsym$,  
$$
\displaystyle \tilde{m}_{\A}= m_{\A} - \sum_{\B\in\Pi} \alpha_{\B} \,m_{\B},
$$
satisfying: (i) if $\B \in\atms$ or $\shape{\B}\neq\shape{\A}$, then $\alpha_{\B} = 0$; and\ \  (ii) $\sum_{\B} \alpha_{\B} = 1$.
\end{prop}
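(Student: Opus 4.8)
The plan is to build $\tilde m_\A$ as the shape-homogeneous piece of the image of $m_\A$ under the canonical projection onto primitives, and then to fix the coefficient sum by abelianizing. Throughout I work in the graded connected Hopf algebra $\ncsym$ (case $|\x|=\infty$), and I first record two structural facts. The coproduct is \emph{cocommutative}: in $\Delta(m_{\A})=\sum_{\B\djcup\C=\A} m_{\std\B}\otimes m_{\std\C}$ the complementary pairs $(\B,\C)$ and $(\C,\B)$ occur together, so $\Delta$ is invariant under the flip of tensor factors. It is also \emph{shape-graded}: redistributing the blocks of $\A$ between the two factors only redistributes the multiset of block sizes, so that $\Delta(\ncsym_\mu)\subseteq\bigoplus_{\nu\djcup\rho=\mu}\ncsym_\nu\otimes\ncsym_\rho$, the sum being over partitions $\nu,\rho$ whose parts together make up $\mu$. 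Consequently the reduced coproduct $\overline\Delta$ is shape-graded, the primitives split as $\bigoplus_\mu\bigl(\ker\overline\Delta\cap\ncsym_\mu\bigr)$, and the shape-$\mu$ component of any primitive is again primitive.

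Because $\ncsym$ is graded connected and cocommutative, the first Eulerian idempotent
\[
 e=\log^{\star}(\id)=\sum_{k\geq1}\frac{(-1)^{k-1}}{k}\,(\id-u\varepsilon)^{\star k}
\]
(with $u$ the unit and $\star$ the convolution product) maps $\ncsym$ onto its space of primitives; this is the analytic incarnation of the Milnor--Moore isomorphism \cite{MilMoo:1965} invoked above. Setting $\mu:=\shape\A$, I define $\tilde m_\A$ to be the shape-$\mu$ component of $e(m_\A)$, which is primitive by the previous paragraph. It remains to expand $\tilde m_\A$ in the monomial basis.

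Here I use that $(\id-u\varepsilon)^{\star k}(m_\A)=\sum m_{\std{\B_1}}\cdots m_{\std{\B_k}}$, the sum running over ordered splittings of the blocks of $\A$ into $k$ nonempty groups $\B_1,\dots,\B_k$. Each factor lies in $\ncsym_{\shape{\B_i}}$ with the part-sizes of $\shape{\B_1},\dots,\shape{\B_k}$ together making up $\mu$, and since multiplication in $\ncsym$ is shape-filtered, the least shape occurring in such a product is $\mu$, attained only by the ``no-merge'' concatenation $m_{\std{\B_1}\mrg\cdots\mrg\std{\B_k}}$, with coefficient one (recall from \S\ref{subsec:bialgebra structure} that $m_{\A'}\cdot m_{\B'}$ has leading term $m_{\A'\mrg\B'}$). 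Collecting shape-$\mu$ contributions gives
\[
 \tilde m_\A=\sum_{k\geq1}\frac{(-1)^{k-1}}{k}\sum_{(\B_1,\dots,\B_k)} m_{\std{\B_1}\mrg\cdots\mrg\std{\B_k}}.
\]
The term $k=1$ is exactly $m_\A$. For $k\geq2$ the index $\std{\B_1}\mrg\cdots\mrg\std{\B_k}$ has the set partition $\std{\B_1}$ of some $[c]$, $1\leq c<|\mu|$, sitting on its first $c$ letters (and fewer than all of its blocks), so it is \emph{non-atomic} by the very definition of atomicity, while having shape $\mu$ by construction. Writing $\tilde m_\A=m_\A-\sum_\B\alpha_\B m_\B$, every surviving $\B$ is therefore non-atomic of shape $\mu$ (in particular $\alpha_\A=0$), which is assertion (i).

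For (ii) I abelianize. Since $\ab\colon\ncsym\to\sym$ is a Hopf map (\S\ref{sec:n=infty}) it sends the primitive $\tilde m_\A$ to a primitive of $\sym$ of degree $d=|\mu|$; by the Rosas--Sagan computation recalled in \S\ref{sec:newinvariants}, $\ab(m_\C)=c_\mu\,m_\mu$ with one and the same constant $c_\mu>0$ for every $\C$ of shape $\mu$, so $\ab(\tilde m_\A)=c_\mu\bigl(1-\sum_\B\alpha_\B\bigr)m_\mu$. The only primitives of the Hopf algebra $\sym$ in degree $d$ are the scalar multiples of the power sum $p_d=m_{(d)}$; as $\A\in\atms_\sharp$ forces $\ell(\mu)=\ell(\A)\geq2$, the monomial $m_\mu$ is independent of $m_{(d)}$, whence $\ab(\tilde m_\A)=0$ and $\sum_\B\alpha_\B=1$. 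The step requiring the most care is the monomial expansion of the shape-$\mu$ component: one must check that each product $m_{\std{\B_1}}\cdots m_{\std{\B_k}}$ contributes \emph{exactly one} term in shape $\mu$, the concatenation, and that these concatenations are never atomic for $k\geq2$. Both points rest on the shape-filtered multiplication rule; everything else is bookkeeping together with the standard facts that $\log^{\star}(\id)$ projects onto primitives in the cocommutative case and that the degree-$d$ primitives of $\sym$ are spanned by $p_d$.
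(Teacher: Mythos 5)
Your proof is correct, but it reaches the Proposition by a genuinely different route than the paper on all three counts, so a comparison is worthwhile. For \emph{existence}, the paper invokes the Milnor--Moore theorem abstractly (via freeness and cofreeness of $\ncsym$), whereas you build $\tilde m_\A$ explicitly as the shape-$\shape{\A}$ component of $\log^{\star}(\id)$ applied to $m_\A$; this is legitimate since $\ncsym$ is graded, connected and cocommutative over a field containing $\Q$, and since the shape decomposition is a coalgebra grading, so shape components of primitives are primitive. For \emph{(i)}, the paper argues by triangularity with respect to its total order on $\Pi$; you instead read the correction terms off the convolution expansion, using that each product $m_{\std{\B_1}}\cdots m_{\std{\B_k}}$ contains exactly one term of shape $\shape{\A}$, namely the concatenation $m_{\std{\B_1}\mrg\cdots\mrg\std{\B_k}}$, which is non-atomic for $k\geq 2$; this uniqueness claim is right, since any other quasi-shuffle term merges at least two blocks and hence has strictly fewer than $\ell(\A)$ parts. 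For \emph{(ii)}, the paper stays entirely inside $\ncsym$, computing iterated reduced coproducts $\iDelta^{\,r}$ of $m_\A$ and of the correction sum and comparing the identical resulting tensors; you instead push forward along $\ab$, using that $\ab$ is a Hopf map (valid here, as $|\x|=\infty$), that $\ab(m_\C)=c_\mu m_\mu$ with one constant $c_\mu>0$ for all $\C$ of shape $\mu$, and that the degree-$d$ primitives of $\sym$ are spanned by $p_d=m_{(d)}$, so a primitive multiple of $m_\mu$ with $\ell(\mu)\geq 2$ must vanish. What your approach buys: it is constructive, and your expansion
$\tilde m_\A=\sum_{k\geq1}\tfrac{(-1)^{k-1}}{k}\sum_{(\B_1,\dots,\B_k)} m_{\std{\B_1}\mrg\cdots\mrg\std{\B_k}}$
is a closed formula of exactly the M\"obius-inversion flavor that the authors explicitly leave as an open question at the end of Section~\ref{sec:n=infty}, so your argument does more than prove the Proposition. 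What the paper's approach buys: it is more elementary and self-contained, requiring neither cocommutativity nor the convolution-logarithm machinery, and its part (ii) needs no facts about the primitives of $\sym$.
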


\begin{proof} Suppose $\A \in\atms_{\sharp}$. A primitive $\tilde m_\A$ exists by the Milnor--Moore theorem.
\smallskip

\emph{(i).}\ Since $\ncsym = \bigoplus_{\mu} \ncsym_\mu$ is a coalgebra grading by shape, we may assume $\shape{\B} = \shape{\A}$ for any nonzero coefficients $\alpha_\B$. Our total ordering on $\Pi$ shows by triangularity that the $\B$s may be chosen from $\Pi \setminus \atms$. 
\smallskip

\emph{(ii).}\ Define linear maps $\iDelta^{\,j}: \ncsym_{+} \to \ncsym \otimes \ncsym$ recursively by
\begin{align*}
\iDelta(h) &:= \Delta(h) - h\otimes 1 - 1\otimes h, \\[1ex]
\iDelta^{\,j+1}(h) &:= ({\iDelta}\otimes\id^{\otimes j})\circ\iDelta^{\,j}(h) \quad\hbox{for \ }j>0.
\end{align*}
Assume that (i) is satisfied for $\tilde m_\A$ and that $\ell(\A)=r$. Since ${\iDelta}(\tilde{m}_\A)=0$, we have $\iDelta^{\,r}(m_{\A}) = \iDelta^{\,r}(\sum_{\B} \Theta_\B m_\B)$. Now,
\[
\iDelta^{\,r}(m_\A) = \sum_{\sigma\in\Sym_r} \std{A_{\sigma1}} \otimes \std{A_{\sigma2}} \cdots \otimes \std{A_{\sigma r}}.
\]
Indeed, the same holds for any $\B$ with $\shape{\B}=\shape{\A}$:
\[
\iDelta^{\,r}\Bigl(\sum_\B \Theta_\B m_\B\Bigr) =\Bigl(\sum_\B \Theta_\B\Bigr) \sum_{\sigma\in\Sym_r} \std{A_{\sigma1}} \otimes \std{A_{\sigma2}} \cdots \otimes \std{A_{\sigma r}}.
\]
Conclude that $\sum_\B \Theta_B = 1$.
\end{proof}

We say an element $h\in\ncsym_\mu$ has the \emph{``zero-sum''} property if it satisfies (ii) from the proposition. Put $\tilde m_\A := m_\A$ for $\A\in\atms_\flat$. We next describe the coinvariant space $\cosym$. 

\begin{coro}\label{thm:cosym is a Lie kernel}
Let $\mathfrak{C}$ the the Lie ideal in $\mathfrak L(\atms)$ given by $\mathfrak C= \bigl[\mathfrak L(\atms),\mathfrak L(\atms)\bigr] \oplus \atms_\sharp$. If $\varphi: \mathfrak U(\mathfrak L(\atms)) \to \ncsym$ is the Milnor--Moore isomorphism given by $\varphi(\A) = \tilde m_\A$ for all $\A \in \atms$, then $\cosym$ is the Hopf subalgebra $\varphi(\mathfrak U(\mathfrak{C}))$.
\end{coro}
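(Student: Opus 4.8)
The plan is to prove the two inclusions $\varphi(\mathfrak U(\mathfrak C)) \subseteq \cosym$ and $\cosym \subseteq \varphi(\mathfrak U(\mathfrak C))$ separately, obtaining the first by a direct primitivity argument and the second by a Hilbert series comparison. The single computation underlying everything is an observation about the left Hopf kernel: since $\ab$ is a unital algebra map, for any primitive $p\in\ncsym$ one has $(\id\otimes\ab)\circ\Delta(p)=p\otimes 1 + 1\otimes\ab(p)$, so that $p$ lies in $\cosym$ if and only if $\ab(p)=0$. Because $\varphi$ is a Hopf isomorphism it carries the Lie elements of $\mathfrak U(\mathfrak L(\atms))$ onto the primitives of $\ncsym$; in particular $\varphi(\mathfrak C)$ consists of primitives, and the whole question reduces to understanding $\ab$ on primitives.

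For the first inclusion, recall $\mathfrak C = [\mathfrak L(\atms),\mathfrak L(\atms)] \oplus \mathrm{span}(\atms_\sharp)$ and that $\cosym$, being a left Hopf kernel, is a subalgebra; since $\mathfrak U(\mathfrak C)$ is generated as an algebra by $\mathfrak C$, it suffices to show $\varphi(\mathfrak C)\subseteq\cosym$, i.e.\ that $\ab$ annihilates $\varphi(\mathfrak C)$. On the commutator part this is immediate: $\varphi$ is an algebra map, so $\varphi([\mathfrak L,\mathfrak L])$ is spanned by commutators in $\ncsym$, and $\ab$ kills every commutator because its target $\sym$ is commutative. On a generator $\A\in\atms_\sharp$ I would invoke Proposition~\ref{thm:zero-sum prims} together with the Rosas--Sagan description of $\ab$: writing $\tilde m_\A = m_\A - \sum_\B \alpha_\B m_\B$ with every $\B$ of shape $\mu=\shape{\A}$ and $\sum_\B\alpha_\B=1$, and using that $\ab(m_\C)$ is one and the same multiple $c_\mu\,m_\mu$ for every $\C$ of shape $\mu$, we get $\ab(\tilde m_\A)=c_\mu m_\mu - (\sum_\B\alpha_\B)\,c_\mu m_\mu = 0$. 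This is exactly the point of the normalization and the zero-sum property, and it is the step I expect to feel most deliberately engineered. Hence $\varphi(\mathfrak U(\mathfrak C))\subseteq\cosym$.

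For the reverse inclusion I would argue by Hilbert series, using that both spaces are graded with finite-dimensional components, so that containment plus equality of Hilbert series forces equality. On one side, the graded vector-space isomorphism $\ncsym\simeq\cosym\otimes\Lambda$ coming from Theorem~\ref{thm:crossedproduct} gives $\Hilb{t}{\cosym}=\Hilb{t}{\ncsym}\big/\Hilb{t}{\Lambda}$, and since $\Lambda$ has the basis $\{\m_\mu\}$ indexed by all partitions, $\Hilb{t}{\Lambda}=\prod_{i\geq1}(1-t^i)^{-1}$. On the other side, $\mathfrak C$ is a Lie ideal of $\mathfrak L(\atms)$ with abelian quotient $\mathfrak L(\atms)/\mathfrak C\cong\mathrm{span}(\atms_\flat)$, and $\atms_\flat=\{\,\{[d]\}:d\geq1\,\}$ contributes exactly one generator in each degree. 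A PBW basis adapted to $\mathfrak C\subseteq\mathfrak L(\atms)$ then yields a graded vector-space factorization $\ncsym\cong\varphi(\mathfrak U(\mathfrak C))\otimes\mathrm{Sym}\bigl(\mathrm{span}(\atms_\flat)\bigr)$, whence $\Hilb{t}{\varphi(\mathfrak U(\mathfrak C))}=\Hilb{t}{\ncsym}\big/\prod_{i\geq1}(1-t^i)^{-1}$. The two right-hand sides coincide, so the inclusion is an equality.

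The main obstacle is not a single hard estimate but the careful bookkeeping in the reverse inclusion: one must check that the PBW factorization over the ideal $\mathfrak C$ is graded and that $\mathfrak L(\atms)/\mathfrak C$ really is the abelian Lie algebra with one generator per degree, so that its symmetric algebra matches $\Lambda$ degree by degree. A conceptually cleaner alternative, which I would also mention, is to note that in the cocommutative graded connected setting $\cosym$ is itself a Hopf subalgebra whose space of primitives is exactly $\{p\in\mathrm{Prim}(\ncsym):\ab(p)=0\}=\varphi(\mathfrak C)$; applying Milnor--Moore to $\cosym$ then gives $\cosym=\mathfrak U(\varphi(\mathfrak C))=\varphi(\mathfrak U(\mathfrak C))$ directly, at the cost of first verifying that the left Hopf kernel is closed under the coproduct.
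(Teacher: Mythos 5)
Your proposal is correct and follows essentially the same route as the paper: the forward inclusion is obtained by checking that $\ab$ annihilates the generators of $\varphi(\mathfrak U(\mathfrak C))$ (the zero-sum property for $\atms_\sharp$, vanishing on commutators for $[\mathfrak L(\atms),\mathfrak L(\atms)]$), and the reverse inclusion by PBW applied to $\mathfrak C \subseteq \mathfrak L(\atms)$ together with the crossed-product isomorphism $\ncsym \simeq \cosym \otimes \Lambda$ and a graded dimension count. The only differences are cosmetic: the paper justifies the commutator step by showing brackets $[\tilde m_\A, p]$ are primitive and zero-sum rather than invoking commutativity of $\sym$, and it phrases the dimension count as surjectivity of the map $\varphi(\mathfrak U(\mathfrak C)) \otimes \sym \to \ncsym$ rather than as an explicit Hilbert series comparison.
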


\begin{proof}
We first show that $\varphi(\mathfrak U(\mathfrak C)) \subseteq \cosym$. Indeed, $\tilde m_\A \in \cosym$ for all $\A\in \atms_\sharp$, since the zero-sum property means $\ab(\tilde m_\A)=0$. Moreover, if $\A\in \atms_\flat$ and $p$ is any shape-homogeneous primitive element in $\ncsym$ with $\shape{p}=\shape{\A}$, then $\bigl[\tilde m_\A, p\bigr]$ is primitive and zero-sum on  shapes appearing in the support. Hence, $\bigl[\tilde m_\A,p\bigr]$ also belongs to $\cosym$. The inclusion follows, since $\varphi(\mathfrak U(\mathfrak C))$ is generated by elements of these two types. 

It remains to show that $\cosym \subseteq \varphi(\mathfrak U(\mathfrak C))$. To begin, note that $\mathfrak L(\atms) / \mathfrak C$ is isomorphic to the abelian Lie algebra generated by $\atms_\flat$. The universal enveloping algebra of this latter object is evidently isomorphic to $\sym$ (send $\A=\{[d]\}$ to $m_d$). The Poincar\'e--Birkhoff--Witt theorem guarantees that the map $\varphi(\mathfrak U(\mathfrak C)) \otimes \sym \to \ncsym$ given by $a\otimes b \mapsto a\cdot \iota(b)$ is onto $\ncsym$. Conclude that $\cosym \subseteq \varphi(\mathfrak U(\mathfrak C))$, as needed.
\end{proof}

Before turning to the case $|\x|<\infty$, we remark that we have left unanswered the question of finding a systematic procedure (e.g., a closed formula in the spirit of M\"obius inversion) that constructs a primitive element $\tilde{m}_\A$ for each $\A\in\atms_\sharp$.

%-----------------------------------------------------------------------------
% Begin SECTION
%-----------------------------------------------------------------------------
\def\rhymescheme{Quoting Bill Blewett from \cite[A000110]{OEIS}, ``a rhyme scheme is a string of letters (eg, $abba$) such that the leftmost letter is always $a$ and no letter may be greater than one more than the greatest letter to its left. Thus $aac$ is not valid since $c$ is more than one greater than $a$. For example, [$\#\Pi_3=5$] because there are 5 rhyme schemes on 3 letters: $aaa, aab, aba, abb, abc$.''}
%-----------------------------------------------------------------------------
%\mbox{}\vskip2ex
\section{The coinvariant space of $\ncsym$ (Case: \ $|\x|<\infty$)}\label{sec:n<infty}

%-----------------------------------------------------------------------------
\subsection{Restricted growth functions}\label{sec:rhymes}
We repeat our example of Section~\ref{subsec:bialgebra structure}  in the case $n=3$. The leading term with respect to our previous order would be $m_{\textcolor{myred}{13}.\textcolor{myred}{2}.\textcolor{myblue}{4}.\textcolor{myblue}{5}}$, except that this term does not appear because 
%${13}.{2}.{4}.{5}$ 
$\textcolor{myred}{13}.\textcolor{myred}{2}.\textcolor{myblue}{4}.\textcolor{myblue}{5}$ 
has more than $n=3$ parts. Fortunately, the map $\word$ from set partitions to words on the alphabet $\N_{>0}$ reveals a more useful leading term:
\begin{gather}\label{eq:rhyme-product}
	m_{\bs{\textcolor{myred}{121}}} \,\bs{\cdot}\, m_{{\textcolor{myblue}{12}}} \ =\  0+
	m_{\bs{\textcolor{myred}{121}}\textcolor{myblue}{13}} + m_{\bs{\textcolor{myred}{121}}\textcolor{myblue}{31}} + m_{\bs{\textcolor{myred}{121}}\textcolor{myblue}{23}} + 
	m_{\bs{\textcolor{myred}{121}}\textcolor{myblue}{32}} + m_{\bs{\textcolor{myred}{121}}\textcolor{myblue}{21}} + m_{\bs{\textcolor{myred}{121}}\textcolor{myblue}{12}}\,.
\end{gather}
Notice that the words appearing on the right in \eqref{eq:rhyme-product} all begin by $121$ and that the concatenation $\underline{121}\,\underline{12}$ is the lexicographically smallest word appearing there. This is generally true and easy to see: if $\word(\A)=u$ and $\word(\B)=v$, then $uv$ is the lexicographically smallest element of $\word(\A\nshuf\B)$. 

The map $\word$ maps set partitions to \textbf{restricted growth functions}, i.e., the words $w=w_1w_2\cdots w_d$ satisfying $w_1=1$ and $w_i \leq 1+ \max\{w_1,w_2,$ $\ldots,w_{i-1}\}$ for all $2\leq i\leq d$. 
%\[
%	w_1=1 \qquad\hbox{and}\qquad w_i \leq 1+ \max\{w_1,w_2,\ldots,w_{i-1}\} \hbox{\ \ for all }2\leq i\leq d. 
%\]
We call them restricted growth words here. See \cite{Sag:1,Sim:1994,WacWhi:1991} and \cite{BreSch:1,GesWeiWil:1998} for some of their combinatorial properties and applications. These words are also known as \emph{``rhyme scheme words''} in the literature; see \cite{Rio:1979} and \cite[A000110]{Slo:oeis}. 
Before looking for $\cosym$ within $\ncsym$, we first fix the representatives of $\Lambda$. Consider the partition $\mu=3221$. Of course, $\mathbf{m}_\mu$ is the sum of all set partitions of shape $\mu$, but it will be nice to have a single one in mind when we speak of $\textbf{m}_\mu$. A convenient choice turns out to be $123.45.67.8$: if we use the length plus lexicographic order on $\word(\Pi)$, then it is easy to see that $\word(123.45.67.8) = 11122334$ is the minimal element of $\Pi$ of shape $3221$. We are led to introduce the words 
$$
\word(\mu):=1^{\mu_1}2^{\mu_2}\cdots k^{\mu_k} 
$$
associated to partitions $\mu=(\mu_1,\mu_2,\cdots,\mu_k)$; we call such restricted growth words \textbf{convex words} since $\mu_1 \geq \mu_2\geq \cdots \geq \mu_k$.
%
%NOTE TO SELF:
%?? Ascending rhymes are verses if and only if $r>1$ or $|\bs a|=1$. 
%Convex words are primary if and only if $\mu_i=1$ for all $i$.
%

%-----------------------------------------------------------------------------
\subsection{Proof of main theorem}\label{sec:main2}
Call a restricted growth word a \textbf{primary word} if $w_i\cdots w_{n-1}w_n$ if not a restricted growth word for any $i>1$. The \textbf{maximal splitting} of a restricted growth word $w$ is the maximal deconcatenation $w=w'\mrg w''\mrg\cdots\mrg w^{(r)}$ of $w$ into primary words $w^{(i)}$. For example, $12314$ is primary while $11232411$ is a string of four primary words $1\mrg12324\mrg1\mrg1$. 

It is easy to see that if $a,b,c$, and $d$ are primary, then $ac$ = $bd$ if and only if $a=b$ and $c=d$. Together with the remarks on $\A\nshuf\B$ following \eqref{eq:rhyme-product}, this implies that if $\{u_1,u_2,\ldots, u_r\}$ and $\{v_1,v_2,\ldots, v_s\}$ are two sets of primary words, then 
\[
m_{u_1}m_{u_2}\cdots m_{u_r} \qquad\hbox{and}\qquad
m_{v_1}m_{v_2}\cdots m_{v_s}
\]
share the same leading term (namely, $m_{u_1\mrg u_2\mrg\cdots\mrg u_r}$) if and only if $r=s$ and $u_i=v_i$ for all $i$. In other words, our algebra
$\ncsym$ is \emph{primary word--filtered} and freely generated by the monomials
$
\{m_{\word(\A)} \mid \word(\A) \hbox{ is primary}\}.
$
This is the collection of monomials originally chosen by Wolf. 

We aim to index $\cosym$ by the restricted growth words that don't end in a convex word. 
Toward that end, we introduce the notion of \textbf{bimodal words}, i.e., 
words with a maximal (but possibly empty) convex prefix, followed by one primary word. The \textbf{bimodal decomposition} of a restricted growth word $w$ is the expression of $w$ as a product $w=w'\mrg w''\mrg\cdots\mrg w^{(r)} \mrg w^{(r+1)}$, where $w',w'',\ldots, w^{(r)}$ are bimodal and $w^{(r+1)}$ is a possibly empty convex word (which we call a \textbf{tail}). For a given word $w$, this decomposition is accomplished by first splitting $w$ into primary words, then recombining, from left to right, consecutive primary words to form bimodal words. 
For instance, the maximal splitting of $112212$ is $1\mrg1222\mrg12$. The first two factors combine to make one bimodal word; the last factor is a convex tail: 
$1122212 \mapsto \hbox{%
\begin{pspicture}(1.68,.50)(0,0.11)%
\psset{unit=1em}%
%\showgrid
	\rput[bl](0,0){$1\phantom{\mrg}1222\phantom{\mrg}12$}%
	\pscurve[linewidth=.4pt](0,1)(.4,1.1)(.625,1.28)
	  \psline[linewidth=.4pt](.625,1.28)(.625,1)(2.75,1.28)%
	\pscurve[linewidth=.4pt](3.09,1)(3.65,1.1)(4.05,1.28)%
\end{pspicture}% 
}$. 
Similarly, 
$$
1231231411122311 \mapsto 123\mrg12314\mrg1\mrg1\mrg1223\mrg1\mrg1 \mapsto \hbox{%
\begin{pspicture}(3.73,0.45)(0,0.10)%
\psset{unit=1em}%
%\showgrid
	\rput[bl](0,0){\small$123\phantom{\mrg}12314\phantom{\mrg}1\phantom{\mrg}1\phantom{\mrg}1223\phantom{\mrg}1\phantom{\mrg}1$}%
	\pscurve[linewidth=.4pt](0.05,1)(1.22,1.1)(1.5,1.28)
	  \psline[linewidth=.4pt](1.5,1.28)(1.5,1)(3.92,1.28)%
	\pscurve[linewidth=.4pt](4.28,1)(6.75,1.1)(7.70,1.3)
	  \psline[linewidth=.4pt](7.70,1.3)(7.70,1)(8.18,1.28)%
	\pscurve[linewidth=.4pt](8.57,1)(8.85,1.1)(9.00,1.28)%
\end{pspicture}% 
}.
$$

Suppose now that $u$ and $v$ are restricted growth words and that the bimodal decomposition of $u$ is tail-free. Then by construction, the bimodal decomposition of $uv$ is the concatenation of the respective bimodal decompositions of $u$ and $v$. We are ready to identify $\cosym$ as a subalgebra of $\ncsym$. 

\begin{theo} Let $\cosym$ be the subalgebra of $\ncsym$ generated by $\{m_v \mid v\hbox{ is bimodal}\}$. Then $\cosym$ has a basis indexed by restricted growth words $w$ whose bimodal decompositions are tail-free. Moreover, the map $\varphi: \cosym \otimes \Lambda \rightarrow \ncsym$ given by $m_{w'}m_{w''}\cdots m_{w^{(r)}} \otimes \mathbf{m}_{\mu} \mapsto m_{{w'}\mrg {w''}\mrg \cdots\mrg {w^{(r)}} \mrg \word(\mu) }$\, is a vector space isomorphism.
\end{theo}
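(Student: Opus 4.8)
The plan is to run a leading-term (triangularity) argument in the word order introduced above, drawing on two facts recorded earlier: that $\ncsym$ is freely generated by the primary monomials and is \emph{primary word--filtered}, and that for any restricted growth words $u,v$ the leading term of $m_u\cdot m_v$ is $m_{uv}$, the literal concatenation. The one genuinely new input is the observation that this concatenation $uv$ is simultaneously the lex-least element of $\word(\A\snshuf\B)$ \emph{and} the term of fewest parts: writing $\ell,\ell'$ for the number of distinct letters of $u,v$, the word $uv$ uses $\max(\ell,\ell')$ letters, which is the minimum possible, every quasi-shuffle term merging at most $\min(\ell,\ell')$ blocks. Consequently a product of monomials vanishes in the $n$-variable algebra ($n=|\x|$) exactly when its leading word uses more than $n$ letters, so passing to a leading term neither creates nor destroys a surviving basis vector. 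This is what makes the finite case go through, and it is the heart of the matter.

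First I would prove part (1). Since $\cosym$ is by definition generated by the bimodal monomials, it is spanned by products $m_{v_1}m_{v_2}\cdots m_{v_s}$ with each $v_i$ bimodal. By the leading-term rule such a product has leading term $m_{v_1\mrg v_2\mrg\cdots\mrg v_s}$, and repeated application of the compatibility lemma (the bimodal decomposition of $uv$ is the concatenation of those of $u$ and $v$ when $u$ is tail-free) shows that $w:=v_1\mrg\cdots\mrg v_s$ is tail-free with bimodal decomposition exactly $(v_1,\ldots,v_s)$. Hence $(v_1,\ldots,v_s)\mapsto w$ is a bijection from sequences of bimodal words onto tail-free restricted growth words, so distinct spanning products have distinct leading terms. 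Because the leading term occurs with coefficient $1$ and, by the fewest-parts observation, is nonzero precisely when $w$ uses at most $n$ letters, these products are linearly independent and vanish only when they must; triangularity then yields that they form a basis of $\cosym$ indexed by the tail-free words (with at most $n$ letters).

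Next I would set up the combinatorial bijection behind part (2) and assemble the isomorphism. Every restricted growth word $u$ has a unique bimodal decomposition $u=w'\mrg\cdots\mrg w^{(r)}\mrg\word(\mu)$ with tail-free part $w:=w'\mrg\cdots\mrg w^{(r)}$ and convex tail $\word(\mu)$ for a unique partition $\mu$ (the empty tail giving the empty partition); conversely, given a tail-free $w$ and a partition $\mu$, the compatibility lemma applied to $w$ shows the bimodal decomposition of $w\mrg\word(\mu)$ is $(w',\ldots,w^{(r)})$ followed by the convex tail $\word(\mu)$, recovering $(w,\mu)$. Thus $(w,\mu)\mapsto w\mrg\word(\mu)$ is a bijection from pairs (tail-free word, partition) onto all restricted growth words, which by the fewest-parts observation restricts to those using at most $n$ letters. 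Now part (1) together with the basis $\{\m_\mu\}$ of $\Lambda$ gives $\cosym\otimes\Lambda$ a basis indexed by such pairs; each basis vector $m_{w'}\cdots m_{w^{(r)}}\otimes\m_\mu$ is uniquely of this form because a tail-free word has a unique bimodal decomposition, so $\varphi$ is well defined. By construction $\varphi$ sends it to $m_{w\mrg\word(\mu)}$, and the bijection just established shows these run exactly once over the monomial basis $\{m_\A:\ell(\A)\le n\}$ of $\ncsym$. A linear map carrying a basis bijectively to a basis is an isomorphism.

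I expect the main obstacle to be the finite-$n$ bookkeeping concentrated in the fewest-parts observation: one must check that the triangular change of basis from products to monomials remains unitriangular after the truncation $m_\A=0$ for $\ell(\A)>n$, i.e. that the leading words are precisely the part-minimal ones and that the families of tail-free, convex, and arbitrary words match up consistently under this truncation. The purely combinatorial lemmas on primary and bimodal decompositions are routine once the word order is fixed; the subtlety is entirely in verifying that the chosen leading terms are the ones that survive in the $n$-variable algebra.
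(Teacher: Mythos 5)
Your proof is correct and takes essentially the same route as the paper's: products of bimodal monomials have leading term the literal concatenation, uniqueness of the bimodal decomposition makes these leading terms distinct, and the bijection between pairs (tail-free word, partition) and arbitrary restricted growth words carries a basis onto a basis. Your explicit ``fewest-parts'' observation---that the concatenation $uv$ attains the minimum block count $\max(\ell,\ell')$ over all terms of $\A\nshuf\B$, so the lex-least term survives the truncation $m_{\A}=0$ for $\ell(\A)>n$---is exactly the point the paper leaves implicit (it only motivates the word order by example), and making it explicit is a genuine improvement in rigor for the finite-$n$ case rather than a different method.
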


\begin{proof}
The advertised map is certainly onto, since $\{m_{w} \mid w \in \word(\Pi) \}$ is a basis for $\ncsym$ and every restricted growth word has a bimodal decomposition ${w'}\mrg {w''}\mrg \cdots\mrg {w^{(r)}} \mrg \word(\mu)$. It remains to show that the map is one-to-one.

Note that the monomials $\{m_v \mid v\hbox{ is bimodal}\}$ are algebraically independent: certainly, the leading term in a product $m_{v_1}m_{v_2}\cdots m_{v_s}$ (with $v_i$ bimodal) is $m_{v_1\mrg v_2 \mrg\cdots\mrg v_s}$; now, since every word has a unique bimodal decomposition, no (nontrivial) linear combination of products of this form can be zero. Finally, the leading term in the simple tensor $m_{w'}m_{w''}\cdots m_{w^{(r)}} \otimes \mathbf{m}_{\mu}$ is 
the basis vector $m_{w'\mrg w''\mrg \cdots\mrg w^{(r)}} \otimes {m}_{\word(\mu)}$, so no (nontrivial) linear combination of these will vanish under the map $\varphi$.
\end{proof}

%-----------------------------------------------------------------------------
% Begin BIBLIOGRAPHY
%-----------------------------------------------------------------------------
%\bibliographystyle{plain}
%\bibliography{../global-bib}  

%{%\small
%%\parskip=0pt

\def\cprime{$'$}

\end{document}